\documentclass[12pt]{article}
\usepackage{amsmath,amsthm,amssymb}
\usepackage{pifont}
\usepackage{amsfonts}
\usepackage{mathrsfs}
\usepackage{bm}
\usepackage{mathtools}
\usepackage{graphicx}
\usepackage{enumerate}
\usepackage{setspace}
\usepackage{multirow}
\usepackage{booktabs}

\usepackage{enumitem}
\setlist[enumerate, 1]{label = (\roman*), font = \upshape}
\setlist[itemize, 2]{label = {$\circ$}}

\usepackage[pdfencoding=auto,colorlinks,linkcolor=red,anchorcolor=blue,citecolor=green]{hyperref}
\usepackage{enumitem} 

\usepackage[capitalise]{cleveref}

    \usepackage[margin = 1in]{geometry}
 
 \newcommand{\email}[1]{\href{mailto:#1}{\nolinkurl{#1}}}

\usepackage{fouridx}
\newcommand*{\tran}[1]{\,\fourIdx{t\!}{}{}{}{#1}}

\theoremstyle{plain}
\newtheorem{theorem}{Theorem}[section]
\newtheorem{corollary}[theorem]{Corollary}
\newtheorem{lemma}[theorem]{Lemma}

\theoremstyle{definition}
\newtheorem{definition}[theorem]{Definition}

\newtheorem{remark}{Remark}

\numberwithin{equation}{section}

\makeatletter

\@addtoreset{equation}{section}
\makeatother

\def\~#1{\widetilde{#1}}

\DeclareMathOperator{\C}{\mathbb{C}}
\DeclareMathOperator{\Z}{\mathbb{Z}} 
\DeclareMathOperator{\N}{\mathbb{N}}
\DeclareMathOperator{\F}{\mathbb{F}}

\DeclareMathOperator{\Gr}{\operatorname{Gr}}

\usepackage{tikz,xcolor,hyperref}

\definecolor{lime}{HTML}{A6CE39}
\DeclareRobustCommand{\orcidicon}{
\begin{tikzpicture}
\draw[lime, fill=lime] (0,0)
circle[radius=0.16]
node[white]{{\fontfamily{qag}\selectfont \tiny \.{I}D}}; 

\end{tikzpicture}
\hspace{-2mm}
}

\foreach \x in {A, ..., Z}{%
\expandafter\xdef\csname orcid\x\endcsname{\noexpand\href{https://orcid.org/\csname orcidauthor\x\endcsname}{\noexpand\orcidicon}}
}

\usepackage{fouridx}

\begin{document}

\title{
Trivariate $P$- and $Q$-polynomial structures for interior shells of the Grassmann scheme}
\author{Yan Zhu \hspace{-3mm}\orcidA{}}
\renewcommand{\thefootnote}{\fnsymbol{footnote}}
 \date{}
 \maketitle
\footnotetext{School of mathematics, University of Shanghai for Science and Technology,
Shanghai 200093, China.}
\footnotetext{E-mail address: \email{zhuyan@usst.edu.cn}}

\begin{abstract}
Association schemes induced on the shells of the Grassmann scheme $\Gr_q(N,a)$ form an important family in the study of multivariate $P$- and $Q$-polynomial structures.
For $a<N/2$, the scheme induced on the furthest shell $X_a$ is the known bivariate $P$- and $Q$-polynomial scheme arising from attenuated spaces. 
In this paper, we prove that, for every $1\leq i\leq a-1$, the association scheme induced on the interior shell $X_i$ is both trivariate $P$-polynomial and trivariate $Q$-polynomial. 
The two structures share a common index set consisting of the lattice points of a bounded non-simplicial polyhedron. We establish the $P$-polynomial property through combinatorial interpretations of the intersection numbers. 
Using Dunkl’s coordinate description of $X_i$ and the addition theorem, together with recurrence relations for the $q$-Hahn and $q$-Krawtchouk polynomials, we obtain the required vanishing and nonvanishing conditions for the Krein parameters, and hence the $Q$-polynomial structure. 
Motivated by this index set, we extend the notion of an $A_M$-Leonard pair from simplicial to polyhedral domains and show that every interior shell carries a polyhedral $A_3$-Leonard pair.
\end{abstract} 

\section{Introduction}  
The study of $P$- and $Q$-polynomial association schemes is one of the central problems in algebraic combinatorics.  
In the classical univariate case, the $P$-polynomial property corresponds to distance-regular graphs, while the $Q$-polynomial property gives a dual ordering of the primitive idempotents, which was systematically investigated in Bannai--Bannai--Ito--Tanaka \cite{BBIT2021} and Brouwer--Cohen--Neumaier \cite{BCN-1989}.  
The corresponding first and second eigenvalues are expressed in terms of univariate orthogonal polynomials, see Leonard \cite{Leonard-1982} and Terwilliger \cite{Terwilliger-2021}.  
Related polynomials also appear in the harmonic analysis of finite groups studied by Ceccherini-Silberstein--Scarabotti--Tolli \cite{CST-2008}.  
Multivariate orthogonal polynomials and spherical functions associated with finite groups and Gelfand pairs have also been studied extensively; see more details, for example, in \cite{DX-2014,GR2007,IT-2012,Mizukawa2004,MT2004,Scarabotti2011}.  
A natural question is to identify association schemes whose first or second eigenvalues are given by such multivariate orthogonal polynomials.

The first systematic definitions of bivariate $P$- and $Q$-polynomial association schemes of type $(\alpha,\beta)$ were given by Bernard--Cramp\'e--Poulain d'Andecy--Vinet--Zaimi \cite{BCPVZ-2024}. 
They established the bivariate $P$-polynomial property for examples including nonbinary Johnson schemes and schemes arising from isotropic or attenuated spaces. 
Subsequently, Bannai--Kurihara--Zhao--Zhu \cite{BKZZ-2025} introduced multivariate $P$- and/or $Q$-polynomial association schemes with respect to an arbitrary monomial order.  
They obtained further families of examples, including association schemes arising from compositions of Gelfand pairs \cite{CST2006} and generalized Johnson schemes. 
Bernard--Cramp\'e--Vinet--Zaimi--Zhang \cite{BCVZZ-2024} introduced the notion of an $m$-distance-regular graph and showed that it provides a graph interpretation of multivariate $P$-polynomial association schemes.  
Dually, explicit bivariate $Q$-polynomial structures for nonbinary
Johnson schemes and schemes from attenuated spaces were obtained in \cite{BKZZ-2024,BCVZZ-2025,CVZZ-2024}.
This development is also closely related to higher-rank Leonard pairs.
Rank-two constructions related to the Rahman polynomials \cite{IT-2012} and factorized $A_2$-Leonard pairs \cite{CZ-2025} illustrate this connection.
In \cite[Section~7]{BKZZ-2024}, it is shown that a symmetric multivariate $P$- and $Q$-polynomial scheme on a simplicial index domain has the structure of $A_M$-Leonard pairs.
Therefore, explicit examples of multivariate $P$- and $Q$-polynomial association schemes with higher-rank, non-simplicial index domains are of particular interest.

For several multivariate $P$-polynomial association schemes, the relevant spherical functions are known, but the support and nonvanishing conditions required for the corresponding Krein parameters remain difficult to establish; see \cite[Section~5]{BKZZ-2025}. Thus, explicit spherical functions do not by themselves yield a multivariate $Q$-polynomial structure.
In this paper, we study the multivariate polynomial structures and Leonard pairs of the association schemes induced on the interior shells of the Grassmann scheme $\Gr_q(N,a)$.  
After fixing a base $a$-subspace, its stabilizer partitions the vertex set into shells, and the orbitals on each shell form a symmetric association scheme.  
Dunkl \cite{Dunkl-1977} gave a coordinate description of these orbitals and established an addition theorem for the corresponding spherical functions, which provide the main tools for our analysis.
For $a<N/2$, the scheme induced on the furthest shell $X_a$ is the assciation scheme arising from attenuated spaces introduced by Wang--Guo--Li \cite{WGL-2010}, which was proved to bivariate $P$- and $Q$-polynomial \cite{BKZZ-2024,BKZZ-2025,BCVZZ-2025}. 
For the interior shell $X_i$ ($1\leq i\leq a-1$), both the relations and the primitive idempotents of $X_i$ are indexed by three parameters.  
According to Dunkl \cite{Dunkl-1977}, the multiplicity-free decomposition of $L(X_i)$ into
irreducible components is indexed by the integer points in a bounded non-simplicial polyhedron.

The purpose of this paper is to show that the association scheme induced on every interior Grassmann shell is both trivariate $P$-polynomial and trivariate $Q$-polynomial.  
The $P$-polynomial property follows from the combinatorial interpretation of the intersection numbers, while the $Q$-polynomial property is obtained by the calculation of the Krein parameters using the spherical functions.
The index set for both structures consists of the integer points of a non-simplicial three-dimensional polyhedron.  
Thus, unlike the furthest shell giving an assocaition scheme from attenuated spaces, whose index domain is bivariate, every interior shell is genuinely trivariate. 
This leads us to extend the notion of an $A_M$-Leonard pair to polyhedral index domains and to show that every interior Grassmann shell carries a polyhedral $A_3$-Leonard pair.
The paper is organized as follows.  
\cref{sect:minority} reviews the relations and the eigenmatrices of the association scheme induced on each shell of the Grassmann scheme, as well as Dunkl's description and the addition theorem.  
\cref{sect:turkey} recalls multivariate $P$- and $Q$-polynomial association schemes and their equivalent characterizations.  
In \cref{sect:criticism}, we prove the trivariate
$P$-polynomial structure through intersection numbers, while we establish the trivariate $Q$-polynomial structure through Krein parameters in \cref{sect:profit}.  
Finally, in \cref{sect:commemorate}, we introduce the polyhedral $A_M$-Leonard pair and prove that every interior shell of the Grassmann scheme carries a polyhedral $A_3$-Leonard pair.

\section{Association schemes arising from Grassmann shells}
\label{sect:minority}
In this section, we review the symmetric association scheme induced on the shells of the Grassmann scheme $\Gr_q(N,a)$ as well as its eigenmatrices, following the notation in \cite{Dunkl-1977}.

Let $\F_q$ be the finite field of order $q$, and write
$\F_q^N=\F_q^a\oplus\F_q^b$, where $N=a+b$ and $1\leq a\leq b$.
For $0\leq k\leq N$, let $\mathcal L_k$ be the set of all $k$-dimensional
subspaces of $\F_q^N$, and denote $\mathcal L=\bigsqcup_{k=0}^N\mathcal L_k$.
Let $L(\mathcal L)$ be the space of complex-valued functions on $\mathcal L$.
For $\zeta\in\mathcal L$, define $\hat\zeta\in L(\mathcal L)$ by
\[
 \hat\zeta(\eta)=
 \begin{cases}
  1,&\text{if }\zeta\subseteq\eta,\\
  0,&\text{otherwise}.
 \end{cases}
 \qquad (\eta\in\mathcal L).
\]
Let $P_k$ be the span of $\{\hat\zeta\mid\zeta\in\mathcal L_k\}$ and
define $\partial:P_k\rightarrow P_{k-1}$ by
\[
 \partial\hat\zeta=
 \sum_{\substack{\eta\subseteq\zeta\\\dim\eta=k-1}}\hat\eta.
\]
Then $\dim P_k={N\brack k}_q$.
Set $P_{-1}=0$ and $\partial P_0=0$.
For $0\leq k\leq\lfloor N/2\rfloor$, let $V_k=P_k\cap\ker\partial$.
Let $G=GL(N,q)$ act naturally on $\mathcal L$ and also on $L(\mathcal L)$.  
By \cite[Theorem~3.6]{Dunkl-1977}, the spaces $V_0,V_1,\ldots,V_{\lfloor N/2\rfloor}$ are pairwise nonisomorphic
irreducible $G$-modules.

Let $X=\mathcal L_a$ be the set of all $a$-dimensional subspaces of
$\F_q^N$.
For $0\leq i \leq a$, define the relation $R_i$ on $X$ as follows:
\[
 (x,y)\in R_i\quad\Longleftrightarrow\quad
 \dim(x\cap y)=a-i.
\]
The pair $(X=\mathcal L_a,\{R_i\}_{i=0}^a)$ is the \emph{Grassmann
association scheme} denoted as $\Gr_q(N,a)$, also called the $q$-Johnson scheme $J_q(N,a)$, see
\cite{BBIT2021} for more details.

\subsection{Relations on a fixed shell $X_i$}
Fix an $a$-dimensional subspace $x_0=
 \left\{
 \tran{(\tran{\mathfrak{x}_1},\tran{\mathfrak{o}})}
 \mid \mathfrak{x}_1\in\F_q^a
 \right\}.$
 For $0\leq i\leq a$, the $i$-th shell of $\Gr_q(N,a)$ with respect to $x_0$ is
\[X_i=
 \{x\in X\mid (x_0,x)\in R_i\}
 =
 \{x\in X\mid \dim(x\cap x_0)=a-i\}.\]
Notice that $X_i=X_{a-i,i}$ in the notation of Dunkl~\cite{Dunkl-1977}.  

Let $H$ be the maximal parabolic subgroup of $G$ fixing the subspace
$x_0$.  
Dunkl showed that $H$ acts transitively on $X_i$.
Thus $X_i\simeq H/H_{a-i,i}$, where $H_{a-i,i}$ is the stabilizer in $H$ of a
point of $X_i$.
By \cite[Theorem~4.1]{Dunkl-1977}, a point of $X_i$ can be represented
by a triple $(U,W,T)$, where  
\[U\leq\F_q^a, \quad W\leq\F_q^b, \quad \dim U=a-i, \quad  \dim W=i,\quad
 T:W\longrightarrow\F_q^a/U. \]
Let $P=(U,W,T)$ and $P'=(U',W',T')$ be two
points of $X_i$.  
Let $D_{P,P'}=W\cap W'$, and define two natural quotient maps
\[
 \pi:\F_q^a/U\longrightarrow\F_q^a/(U+U'),\qquad 
 \pi':\F_q^a/U'\longrightarrow\F_q^a/(U+U').
\] 
Hence the restrictions of $T$ and $T'$ induce two maps from
$D_{P,P'}$ to $\F_q^a/(U+U')$ as follows
\[
 \Theta_{P;P'}=\pi\circ T|_{D_{P,P'}}, \qquad 
 \Theta_{P';P}=\pi'\circ T'|_{D_{P,P'}}.
\]
 Denote $v(P,P')=\dim(U\cap U')$, $w(P,P')=\dim(W\cap W')$ and $z(P,P')=\operatorname{rank}
 \bigl(\Theta_{P;P'}-\Theta_{P';P}\bigr)$.
Define the relations on $X_i$ indexed by $(v,w,z)$ as follows:
\begin{equation}\label{eqn:return}
 (P,P')\in R_{v,w,z}^{(i)}
 \quad\Longleftrightarrow\quad
 \bigl(v(P,P'),w(P,P'),z(P,P')\bigr)=(v,w,z).
\end{equation}
Denote $c_i(v)=2i-a+v$.
Then the indices of the relations on $X_i$ form the following set
\begin{equation}\label{eqn:clinic}
 \mathcal I_i=
 \left\{
 (v,w,z)\in\mathbb N^3:
 \begin{array}{l}
 \max\{0,a-2i\}\leq v\leq a-i,\\
 \max\{0,2i-b\}\leq w\leq i,\\
 0\leq z\leq\min\{w,c_i(v)\}
 \end{array}
 \right\}.
\end{equation}
By \cite[Theorem~5.6]{Dunkl-1977}, the relations
$R^{(i)}_{v,w,z}$  are precisely the $H$-orbitals on $X_i$.  
Dunkl showed that the
permutation representation of $H$ on $X_i$ is multiplicity-free.
Moreover, we can check that the relation $R_{v,w,z}^{(i)}$  is symmetric.
Hence $(X_i,\{R^{(i)}_{v,w,z}\}_{(v,w,z)\in \mathcal I_i})$
forms a symmetric (also commutative) association scheme.

\begin{remark}
The parameters in \cite[Theorem~5.6]{Dunkl-1977} are obtained by fixing a point $P_0=(U_0,W_0,0)$ in $X_i$.
For $P=(U,W,T)\in X_i$, we have $\Theta_{P_0;P}=0$ and
\[
 \Theta_{P;P_0}
 =
 \left(
 W\cap W_0\longrightarrow W
 \xrightarrow{T}\F_q^a/U
 \longrightarrow\F_q^a/(U+U_0)
 \right).
\]
Consequently, $(P_0,P)\in R_{v,w,z}^{(i)}$ if and only if
\[
 v=\dim(U\cap U_0),\quad
 w=\dim(W\cap W_0), \quad
 z=
 \operatorname{rank}\bigl(
 \Theta_{P_0;P}-\Theta_{P;P_0}
 \bigr)
 =
 \operatorname{rank}\Theta_{P;P_0}.
\]
\end{remark}
Throughout the paper, we keep Dunkl's notation $V_{m,n,r}$ for the irreducible $H$-modules.
\begin{lemma}[{\cite[Corollary~4.20]{Dunkl-1977}}]
The irreducible $H$-module $V_{m,n,r}$ occurs in the restriction of the
$k$-th irreducible $G$-module $V_k$ if and only if
\[2m+r\leq a,\qquad
 2n+r\leq b,\qquad
 k-b\leq m-n\leq a-k,\qquad
 m+n+r\leq k.\]
\end{lemma}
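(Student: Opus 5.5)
This is Dunkl's branching result, so the plan is to reconstruct it from the structure of $H$ and the fibration of $G$-orbits over $H$-orbits, not to check it directly.

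\emph{Step 1: branching of the permutation modules $P_k$.} Since $P_k\cong L(\mathcal L_k)$ as $G$-modules, its restriction to $H$ splits over the $H$-orbits on $\mathcal L_k$. A $k$-subspace $\zeta$ is determined up to $H$ by $j=\dim(\zeta\cap x_0)$, so
\[
P_k|_H\cong\bigoplus_j L(\mathcal L_{k,j}),\qquad \mathcal L_{k,j}=\{\zeta\in\mathcal L_k:\dim(\zeta\cap x_0)=j\}.
\]
Each $\mathcal L_{k,j}$ is an $H$-homogeneous space of the same type as the shells. Its points are described by triples $(U,W,T)$ with $\dim U=j$, $\dim W=k-j$ and $T:W\to\F_q^a/U$, exactly as in \cite[Theorem~4.1]{Dunkl-1977}.

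\emph{Step 2: decompose each $L(\mathcal L_{k,j})$.} Use the unipotent radical $\operatorname{Hom}(\F_q^b,\F_q^a)$ of $H$ together with its Levi factor $GL(a,q)\times GL(b,q)$. Decompose with respect to the characters of the radical: the orbit of a character determines a rank $r$. The stabilizer data then reduce to Grassmann-type modules for $GL(a-r)$ and $GL(b-r)$, whose constituents are indexed by $m$ and $n$ via the $GL$ analogue of the decomposition of $P_k$ into the $V$'s. The resulting irreducible $V_{m,n,r}$ requires $2m+r\le a$ and $2n+r\le b$, which are exactly the conditions for the spherical parts to exist in $\F_q^{a-r}$ and $\F_q^{b-r}$. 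This yields the multiplicity-free list of constituents of each $L(\mathcal L_{k,j})$, namely those $(m,n,r)$ with $m\le j$, $n\le k-j$ and the corresponding complementary bounds $m+r\le\dots$.

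\emph{Step 3: pass from $P_k$ to $V_k$.} Use $P_k\cong V_k\oplus P_{k-1}$ for $k\le N/2$, which holds because $\partial$ is surjective and equivariant. Then
\[
V_k|_H=P_k|_H\ominus P_{k-1}|_H,
\]
as a difference of multiplicities. Counting via Step 2, a triple $(m,n,r)$ survives precisely when its multiplicity in $P_k$ exceeds that in $P_{k-1}$. A telescoping over $j$ should show this difference is $1$ exactly when $m+n+r\le k$ and $k-b\le m-n\le a-k$, and $0$ otherwise.

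The main obstacle is Step 3's bookkeeping: establishing that the multiplicity of $V_{m,n,r}$ in $P_k$ equals the number of admissible $j$, computing this as a count of $j$ in an interval, and verifying that successive differences give exactly the four stated inequalities. I would first test small cases such as $a=1$ to confirm the interval endpoints before writing the general count.
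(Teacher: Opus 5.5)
The paper gives no proof of this lemma; it imports it from Dunkl. The way it is used afterwards, with the shell decomposition written as $\bigoplus\rho_{a-i,i}V_{m,n,r}$ (restrictions of submodules of the $V_k\subseteq L(\mathcal L)$), suggests that Dunkl's logic runs in the opposite direction from yours. There the $V_{m,n,r}$ are realized inside the $V_k$, the branching of $V_k$ comes first, and the orbit decompositions follow by restriction.

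Your route is: the Mackey decomposition of $P_k|_H$ over the orbits $\mathcal L_{k,j}$, then Clifford theory for each $L(\mathcal L_{k,j})$ with respect to the radical $\operatorname{Hom}(\F_q^b,\F_q^a)$, then $V_k|_H=P_k|_H\ominus P_{k-1}|_H$. This is correct and self-contained, provided you prove Step~2 directly. Quoting the shell decomposition there would be circular.

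\emph{What each route buys.} Yours describes $V_{m,n,r}$ structurally: it is induced from a rank-$r$ character of the radical tensored with the $m$-th and $n$-th Grassmann constituents for $GL(a-r,q)$ and $GL(b-r,q)$. This explains $d_{m,n,r}=\mathsf R_q(a,b;r)\Delta_q(a-r,m)\Delta_q(b-r,n)$ and gives every orbit decomposition at once. Dunkl's route gives explicit models of $V_{m,n,r}$ inside $L(\mathcal L)$, which is what the zonal spherical functions and the addition theorem need.

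\emph{Where the work really is.} You have misplaced the difficulty, and the bound you wrote in Step~2 is wrong. The content of Step~2 is the following. For $C\in\operatorname{Hom}(\F_q^a,\F_q^b)$ of rank $r$, the $\chi_C$-isotypic part of $L(\mathcal L_{k,j})$ is spanned by the functions $T\mapsto\psi(\operatorname{tr}(\bar C T))$ on the fibres over pairs $(U,W)$ with $U\subseteq\ker C$ and $\operatorname{Im}C\subseteq W$. Here $\bar C\colon\F_q^a/U\to W$ is the map induced by $C$.

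These functions are permuted without any phase by the stabilizer of $C$ in $GL(a,q)\times GL(b,q)$. Hence this part is the permutation module of $GL(\ker C)\times GL(\F_q^b/\operatorname{Im}C)$ on pairs consisting of a $j$-subspace of an $(a-r)$-space and a $(k-j-r)$-subspace of a $(b-r)$-space. This gives three things:
\begin{enumerate}
\item multiplicity-freeness;
\item a labelling independent of $(k,j)$, which you need in order to add multiplicities over $j$;
\item the exact constituent list
\[
m\le j,\qquad m+r\le a-j,\qquad n+r\le k-j,\qquad n\le b-k+j .
\]
\end{enumerate}
If you use your bound $n\le k-j$ instead of $n+r\le k-j$, the count produces $m+n\le k$ rather than $m+n+r\le k$.

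\emph{Step~3 with the correct list.} The multiplicity of $V_{m,n,r}$ in $P_k|_H$ is the number of integers $j$ with
\[
\max\{m,k-b+n\}\le j\le\min\{a-m-r,k-n-r\}.
\]
For $k\le N/2$ this interval is nonempty iff $2m+r\le a$, $2n+r\le b$ and $m+n+r\le k$. At the first such value $k=m+n+r$ it is the single point $j=m$.

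Passing from $k-1$ to $k$, the upper end increases iff $k\le a-m+n$, and the lower end increases iff $k>b+m-n$. Since $2k\le N$, the inequalities $k\le a-m+n$ and $k\le b+m-n$ cannot both fail. So the multiplicity difference is $1$ exactly when all four stated inequalities hold, and $0$ otherwise. Note that this is a difference in $k$, not a telescoping over $j$.
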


\begin{lemma}[{\cite[Theorem~5.7]{Dunkl-1977}}]
The space of functions on $X_i$ has the multiplicity-free decomposition
\[L(X_i)
 =
 \bigoplus_{(m,n,r)\in\mathcal D_i^*}
 \rho_{a-i,i}V_{m,n,r},\]
where $\rho_{a-i,i}f=f|_{X_i}$ and
\[\mathcal D_i^*
 =
 \left\{
 (m,n,r)\in\mathbb N^3\mid
 0\leq r\leq i,
 0\leq m\leq\min\{a-i,i-r\},
 0\leq n\leq\min\{i-r,b-i\}
 \right\}.\]
Moreover, $\rho_{a-i,i}V_{m,n,r}\cong V_{m,n,r}$ for every
$(m,n,r)\in\mathcal D_i^*$.
\end{lemma}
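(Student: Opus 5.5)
Since this lemma is Dunkl's Theorem~5.7, the plan is to recover it from the earlier lemma (Dunkl's Corollary~4.20), Frobenius reciprocity, and a dimension count.

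\emph{Step 1: restriction from $G$.} I would regard $X_i$ as an $H$-orbit inside $X=\mathcal L_a$. As a $G$-module, $L(X)\cong P_a$ decomposes multiplicity-freely as $V_0\oplus V_1\oplus\cdots\oplus V_a$, because $a\le N/2$. The decomposition $X=\bigsqcup_i X_i$ gives $L(X)=\bigoplus_i L(X_i)$ as $H$-modules. Consequently, for every $H$-irreducible $V_{m,n,r}\subseteq V_k$ with $k\le a$, the restriction map $\rho_{a-i,i}$ is an $H$-homomorphism $V_{m,n,r}\to L(X_i)$. By Schur's lemma it is either zero or injective, which accounts for the isomorphism claim $\rho_{a-i,i}V_{m,n,r}\cong V_{m,n,r}$.

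\emph{Step 2: multiplicity-freeness and the count.} Dunkl showed that $H$ acts multiplicity-freely on $X_i$ with orbitals $R^{(i)}_{v,w,z}$, $(v,w,z)\in\mathcal I_i$. Hence $L(X_i)$ has exactly $|\mathcal I_i|$ pairwise nonisomorphic irreducible constituents. It therefore suffices to do two things:
\begin{enumerate}
\item show that $\rho_{a-i,i}V_{m,n,r}\neq 0$ for every $(m,n,r)\in\mathcal D_i^*$, with the relevant $V_{m,n,r}$ lying in some $V_k$, $k\le a$, as allowed by the earlier lemma;
\item show that $|\mathcal D_i^*|=|\mathcal I_i|$.
\end{enumerate}
Distinct triples give nonisomorphic $H$-modules, so the images are independent. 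The equality of cardinalities then forces the images to exhaust $L(X_i)$.

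For the counting, I would use the substitution $m=a-i-v$, $n=i-w$, $r=z$ and check that it turns the inequalities of $\mathcal I_i$ into those of $\mathcal D_i^*$, or at least that the two sets are equinumerous. The constraint $z\le \min\{w,c_i(v)\}$ should correspond to $m\le i-r$ and $n\le i-r$. Alternatively, I would compare $\sum\dim V_{m,n,r}$ with $|X_i|$.

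\emph{Step 3: nonvanishing, the main obstacle.} The hard step is proving $\rho_{a-i,i}V_{m,n,r}\neq0$ exactly when $m\le a-i$ and $n\le b-i$. I would first choose $k=m+n+r$, the minimal admissible value, and then write down an explicit highest-weight-type vector in $V_{m,n,r}\subseteq V_k$. Dunkl builds such vectors from the functions $\hat\zeta$ with $\zeta=\zeta_1\oplus\zeta_2\oplus\zeta_3$, where $\zeta_1\le\F_q^a$ has dimension $m$, $\zeta_2\le\F_q^b$ has dimension $n$, and $\zeta_3$ is a graph of dimension $r$; these are then antisymmetrized to lie in $\ker\partial$. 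I would evaluate such a vector at a suitable $x\in X_i$ and show that some value is nonzero. The choice of $x$ needs $x\cap\F_q^a$ to be large enough ($m\le a-i$), the projection to $\F_q^b$ to be large enough ($n\le b-i$), and $r\le i$ graph directions.

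Conversely, if $m>a-i$, then every $\zeta$ of the relevant type fails to be contained in any point of $X_i$, so the restriction vanishes; the case $n>b-i$ is symmetric. In practice this converse is not even needed, because the counting argument in Step 2 already determines the set.
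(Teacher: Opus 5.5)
Your Steps 1--2 are a sound reduction, and the counting part is exact. The substitution $m=a-i-v$, $n=i-w$, $r=z$ maps $\mathcal I_i$ bijectively onto $\mathcal D_i^*$: the lower bounds on $v,w$ become $m\le\min\{a-i,i\}$ and $n\le\min\{i,b-i\}$, and $z\le\min\{w,c_i(v)\}$ becomes $r\le i-\max\{m,n\}$. For $(m,n,r)\in\mathcal D_i^*$, the value $k=m+n+r$ is admissible in the earlier lemma (Dunkl's Corollary~4.20) and satisfies $k\le a$, since $2m+r\le a$, $2n+r\le b$ and $m+n+r\le a$.

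So everything rests on the claim $\rho_{a-i,i}V_{m,n,r}\neq0$ for all $(m,n,r)\in\mathcal D_i^*$. The paper only imports the lemma from Dunkl, so your argument must supply this claim, and Step~3 does not. No vector of $V_{m+n+r}$ is written down and no value on $X_i$ is computed. In particular, nothing shows that the copy of $V_{m,n,r}$ inside $V_{m+n+r}$, rather than one inside another admissible $V_k$, is nonzero on every shell where it should occur. Invoking Frobenius reciprocity does not help by itself, since $V_{m,n,r}^{H_{a-i,i}}\neq0$ is the same claim.

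The heuristic you sketch for Step~3 also misidentifies the mechanism. Every $x\in X_i$ projects onto an $i$-dimensional subspace of $\F_q^b$, so the projection being ``large enough'' cannot produce $n\le b-i$. Moreover, an $n$-dimensional $\zeta_2\le\F_q^b$ lies in some $x\in X_i$ as soon as $n\le i$, so the ``symmetric'' containment obstruction for $n>b-i$ does not exist. The bounds $n\le b-i$ and $m\le i-r$ are codimension bounds. Already in the set analogue, for disjoint pairs $\{u_j,u_j'\}\subseteq[a]$, the function $\prod_{j=1}^m(\mathbf 1[u_j\in A]-\mathbf 1[u_j'\in A])$ is nonzero on some $s$-subset $A$ iff $m\le s\le a-m$. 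An explicit-vector proof must therefore exploit the cancellations in the antisymmetrized vector, not containment.

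A route that avoids explicit vectors is the little-group (Clifford) method for $H=(GL_a\times GL_b)\ltimes\{h_S\}$:
\begin{itemize}
\item The $h_S$ translate the fibre $\operatorname{Hom}(W,\F_q^a/U)$ over $(U,W)$ transitively.
\item The characters occurring on that fibre correspond to $Y\in\operatorname{Hom}(\F_q^a,\F_q^b)$ with $U\subseteq\ker Y$, $\operatorname{Im}Y\subseteq W$ and $\operatorname{rank}Y=r\le i$.
\item The stabilizer of $Y$ acts on the $(a-i)$-subspaces $U$ of the $(a-r)$-space $\ker Y$, and on the $(i-r)$-subspaces $W/\operatorname{Im}Y$ of the $(b-r)$-space $\F_q^b/\operatorname{Im}Y$.
\end{itemize}
The two Grassmann decompositions then give exactly $m\le\min\{a-i,i-r\}$ and $n\le\min\{i-r,b-i\}$. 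The dimensions are $\mathsf R_q(a,b;r)\Delta_q(a-r,m)\Delta_q(b-r,n)=d_{m,n,r}$. What remains is to match this parametrization with Dunkl's labels $V_{m,n,r}$.
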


Let $E_{m,n,r}^{(i)}$ denote the primitive idempotent corresponding to the
irreducible component $\rho_{a-i,i}V_{m,n,r}\subseteq L(X_i).$
In particular, $E_{0,0,0}^{(i)}$ is the orthogonal projection onto the
constant functions on $X_i$.

\subsection{Spherical functions and eigenmatrices}

In this subsection, we give the eigenmatrices of the association scheme on $X_i$ from
spherical functions discussed in \cite[Theorems~5.6 and~5.7]{Dunkl-1977}.
Recall that the relations of the association
scheme on $X_i$ are the $H$-orbitals
\[
 R^{(i)}_{v,w,z}
 =
 \left\{
 (P,P')\in X_i\times X_i:
 \bigl(v(P,P'),w(P,P'),z(P,P')\bigr)=(v,w,z)
 \right\}.
\]
Denote $\tau_0=(a-i,i,0)$.
Then the diagonal relation is $R^{(i)}_{\tau_0}$.
For later use, define the number of rank-$t$ maps from $\F_q^A$ to
$\F_q^B$  by
\[\mathsf R_q(A,B;t)
 =
 \prod_{\ell=0}^{t-1}
 \frac{(q^A-q^\ell)(q^B-q^\ell)}{q^t-q^\ell}, \quad 0\leq t\leq\min\{A,B\},
 \qquad
 \mathsf R_q(A,B;0)=1.\]
Taking $x=y=a-i$ in the counting number $N(x,v,w,z;y)$ after \cite[Theorem 6.2]{Dunkl-1977}, and using \cite[Proposition~4.4]{Dunkl-1977} for the number of rank-$z$ maps, we obtain the valency of the relation indexed by $\tau=(v,w,z)\in \mathcal I_i$ as follows
\[\begin{aligned}
 k^{(i)}_{(v,w,z)}
  ={}&{a-i\brack v}_q{i\brack a-i-v}_q q^{(a-i-v)^2}
 {i\brack w}_q{b-i\brack i-w}_q q^{(i-w)^2}
 \notag\\
 &\quad\times
 \mathsf R_q(w,c_i(v);z)
 q^{w(a-i-v)+i(i-w)}. 
 \end{aligned}\]
 Assume ${L\brack -1}_q=0$, and set $\Delta_q(L,t)=
 {L\brack t}_q-{L\brack t-1}_q$.
 By \cite[Proposition~4.15]{Dunkl-1977}, the multiplicity of the primitive
idempotent indexed by
$\lambda=(m,n,r)\in\mathcal D_i^*$ equals
\[d_{m,n,r}
 =
 \dim V_{m,n,r}
 =
 \mathsf R_q(a,b;r)
 \Delta_q(a-r,m)
 \Delta_q(b-r,n).\]
Let $E_m(A,B,C,x;q^{-1})$ and $K_r(z;A,B;q)$ be the $q$-Hahn and
$q$-Krawtchouk polynomials in the notation of
\cite{Dunkl-1977}.  
For $ \lambda=(m,n,r)\in\mathcal D_i^*$ and $\tau=(v,w,z)\in\mathcal I_i$
such that $r\leq\min\{w,c_i(v)\}$, define
\[\Phi_{m,n,r}^{(i)}(v,w,z)
 =
 E_m(a-i,i-r,a-i,v;q^{-1})
 E_n(i-r,b-i,i-r,w-r;q^{-1})
 K_r(z;w,c_i(v);q).\]
Assume $\Phi_{m,n,r}^{(i)}(v,w,z)=0$ if
$r>\min\{w,c_i(v)\}$.  Moreover,
$\Phi_{m,n,r}^{(i)}(\tau_0)\neq0$.
We define the normalized spherical function so that $\phi_{m,n,r}^{(i)}(\tau_0)=1$ by
\begin{equation}\label{eqn:calculation}
\phi_{m,n,r}^{(i)}(v,w,z)
 =
 \frac{\Phi_{m,n,r}^{(i)}(v,w,z)}
 {\Phi_{m,n,r}^{(i)}(\tau_0)}.
\end{equation}
Let $A_\tau$ denote the adjacency matrix of $R_\tau^{(i)}$, and let
$E_\lambda=E_{m,n,r}^{(i)}$ be the primitive idempotent corresponding to
$\rho_{a-i,i}V_{m,n,r}$, where $d_\lambda=d_{m,n,r}$.  We use the convention
\[
 A_\tau E_\lambda=P_\tau(\lambda)E_\lambda,
 \qquad
 E_\lambda
 =
 \frac1{|X_i|}
 \sum_{\tau\in\mathcal I_i}Q_\lambda(\tau)A_\tau.
\]
We call $P_\tau(\lambda)$ and $Q_\lambda(\tau)$ the first and second eigenvalues of the association scheme, respectively.

\begin{theorem}
\label{thm:hallway}
For $\tau=(v,w,z)\in\mathcal I_i$ and
$\lambda=(m,n,r)\in\mathcal D_i^*$, the first and second eigenvalues of
the association scheme induced on $X_i$ are
\begin{equation}\label{eqn:finish}
P_\tau(\lambda)=
 k^{(i)}_\tau \cdot \phi_\lambda^{(i)}(\tau), \qquad
 Q_\lambda(\tau)
 =
 d_\lambda \cdot \phi_\lambda^{(i)}(\tau).    
\end{equation}
\end{theorem}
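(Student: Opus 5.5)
The plan is to deduce both formulas from the general theory of spherical functions for a Gelfand pair, applied to $(H,H_{a-i,i})$. Since the permutation representation of $H$ on $X_i$ is multiplicity-free and each orbital is symmetric, the Bose--Mesner algebra spanned by the $A_\tau$ is commutative. It is the algebra of $H$-invariant kernels on $X_i\times X_i$, i.e.\ the Hecke algebra of $H_{a-i,i}$-bi-invariant functions. Its primitive idempotents $E_\lambda$ are the orthogonal projections onto the irreducible components $\rho_{a-i,i}V_{m,n,r}$ from the lemma quoting \cite[Theorem~5.7]{Dunkl-1977}.

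First, fix the base point $P_0=(U_0,W_0,0)$. For each $\lambda$, let $\omega_\lambda$ be the unique $H_{a-i,i}$-invariant function in $\rho_{a-i,i}V_\lambda$ normalized by $\omega_\lambda(P_0)=1$. By \cite[Theorems~5.6 and~5.7]{Dunkl-1977}, $\omega_\lambda(P)$ depends only on the orbit index $\tau=(v,w,z)$ of $(P_0,P)$, and equals $\Phi^{(i)}_\lambda(\tau)$ up to a scalar. The vanishing convention $r>\min\{w,c_i(v)\}$ is exactly Dunkl's support statement, and $\Phi^{(i)}_\lambda(\tau_0)\ne0$ lets us normalize. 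So $\omega_\lambda(P)=\phi^{(i)}_\lambda(\tau)$. I would also check that Remark~1 gives the same $(v,w,z)$ as Dunkl's parametrization, in particular that $z=\operatorname{rank}\Theta_{P;P_0}$ matches his rank parameter.

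Second, I would derive the eigenvalues. Because $A_\tau$ commutes with $H$, it preserves $\rho_{a-i,i}V_\lambda$ and the line of $H_{a-i,i}$-invariants in it, so $A_\tau\omega_\lambda=P_\tau(\lambda)\omega_\lambda$. Evaluating at $P_0$ gives
\[
P_\tau(\lambda)=\sum_{P:(P_0,P)\in R_\tau}\omega_\lambda(P)=k^{(i)}_\tau\,\phi^{(i)}_\lambda(\tau),
\]
since $\omega_\lambda$ is constant on the sphere $\{P:(P_0,P)\in R_\tau\}$, which has $k_\tau^{(i)}$ elements.

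Third, for $Q$, I would use the standard projection formula. With respect to an orthonormal basis $f_1,\dots,f_{d_\lambda}$ of $\rho_{a-i,i}V_\lambda$, the kernel is $E_\lambda(P,P')=\sum_j f_j(P)\overline{f_j(P')}$. This kernel is $H$-invariant, so as a function of $P'$ with $P=P_0$ fixed it is $H_{a-i,i}$-invariant and lies in $\rho_{a-i,i}V_\lambda$. Hence $E_\lambda(P_0,P)=c\,\omega_\lambda(P)$, and the value at $P=P_0$ gives $c=E_\lambda(P_0,P_0)=\operatorname{tr}(E_\lambda)/|X_i|=d_\lambda/|X_i|$ by transitivity. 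Comparing with $E_\lambda=|X_i|^{-1}\sum_\tau Q_\lambda(\tau)A_\tau$ yields $Q_\lambda(\tau)=d_\lambda\phi^{(i)}_\lambda(\tau)$. Symmetry of the relations makes $\omega_\lambda$ real, so no complex conjugation issue arises. Finally, the equality $d_\lambda=\dim V_{m,n,r}$ quoted from \cite[Proposition~4.15]{Dunkl-1977} is justified by $\rho_{a-i,i}V_{m,n,r}\cong V_{m,n,r}$.

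The main obstacle is the first step: confirming that Dunkl's spherical function is exactly the product $\Phi^{(i)}_\lambda$ with these arguments ($v$, $w-r$, $z$ and the parameters $c_i(v)=2i-a+v$), including the vanishing range. This is bookkeeping against \cite[Theorem~5.7]{Dunkl-1977} with $x=y=a-i$. The remaining steps are formal consequences of Gelfand pair theory.
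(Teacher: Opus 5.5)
Your proposal is correct and follows the same route as the paper. Both rest on Dunkl's identification of $\Phi^{(i)}_\lambda$ (up to a nonzero scalar) as the zonal spherical function of the Gelfand pair $(H,H_{a-i,i})$, and both read off $P_\tau(\lambda)=k^{(i)}_\tau\phi^{(i)}_\lambda(\tau)$ from the scalar action of $A_\tau$. The only difference is that you derive $Q_\lambda(\tau)=d_\lambda\phi^{(i)}_\lambda(\tau)$ directly from the projection kernel $E_\lambda(P_0,\cdot)$ and $\operatorname{tr}E_\lambda=d_\lambda$, whereas the paper simply quotes the standard identity $Q_\lambda(\tau)/d_\lambda=P_\tau(\lambda)/k^{(i)}_\tau$ from \cite[Theorem~2.2]{BBIT2021}.
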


\begin{proof}
By \cite[Theorems~5.6 and~5.7]{Dunkl-1977}, the pair $(H,H_{a-i,i})$ is a finite Gelfand pair, and up to a nonzero scalar,
$\Phi_{m,n,r}^{(i)}$  is the zonal spherical
function associated with the irreducible component $\rho_{a-i,i}V_{m,n,r}\subseteq L(X_i).$
Hence $\phi_\lambda^{(i)}$ is its normalization at the diagonal relation.  
The adjacency operator $A_\tau$ acts on this component by the scalar $k^{(i)}_\tau \cdot \phi_\lambda^{(i)}(\tau)$, which gives the value of $P_\tau(\lambda)$. 
The value of $Q_\lambda(\tau)$ follows from the identity
$\frac{Q_\lambda(\tau)}{d_\lambda}=
\frac{P_\tau(\lambda)}{k^{(i)}_\tau}$, see
\cite[Theorem~2.2]{BBIT2021}.
\end{proof}

\section[Multivariate P- or Q-polynomial association schemes]{Multivariate $P$- or $Q$-polynomial association scheme}\label{sect:turkey}
 In this section, we mainly recall the concepts of multivariate $P$- and/or $Q$-polynomial association schemes proposed in \cite{BKZZ-2025}. 
We first recall the monomial order.
\begin{definition}[{\cite[Definition~2.2]{BKZZ-2025}}] \label{def:genetic}
    A \emph{monomial order} $\le$ on $\C [x_1, x_2, \ldots , x_\ell]$
    is a relation on the set of monomials $x_1^{n_1}x_2^{n_2}\cdots x_\ell^{n_\ell}$ satisfying:
    \begin{enumerate}
        \item $\le$ is a total order;
        \item  for monomials $u,v,w$, if $u\le v$, then $w u \le w v$; 
        \item$\le$ is a well-ordering, i.e., any nonempty subset of the set of monomials
        has a minimum element under $\le$.
    \end{enumerate}
\end{definition}
Denote $\N^\ell:=\{(n_1,n_2,\ldots ,n_\ell) \mid \text{$n_i$ are nonnegative integers}\}$.
For $\alpha=(n_1,n_2,\ldots , n_\ell)\in \N^\ell$
and $\bm{x}=(x_1,x_2,\ldots , x_\ell)$,
we write the monomial $x_1^{n_1}x_2^{n_2}\cdots x_\ell^{n_\ell}$
as $\bm{x}^\alpha$.
Denote $o:=(0,0,\ldots,0)\in \N^\ell$.
Let $\epsilon_1,\epsilon_2,\ldots,\epsilon_\ell$ be the standard basis
vectors of $\N^\ell$.  We use the same notation $\le$ to denote the order on
$\N^\ell$.
The conditions (ii) and (iii) in \cref{def:genetic} mean that, for $\alpha,\beta,\gamma\in \N^\ell$,  $\alpha \le \beta$ implies that $\alpha+\gamma\le \beta+\gamma$, and $o\le \alpha$ for any $\alpha \in \N^\ell$.

\begin{definition}
Consider the coordinatewise partial order $\preceq_{cw}$ on $\N^\ell$: 
\[\text{for~} \alpha=(n_1,\ldots,n_\ell), \beta=(m_1,\ldots,m_\ell), \text{we~write~}\beta\preceq_{cw}\alpha \text{~if~} m_i\le n_i \text{~for~every~}i.\]
We call a subset $\mathcal D\subset\N^\ell$ a {\emph{down-set}} with respect to $\preceq_{cw}$ if
$\alpha\in\mathcal D$ and $\beta\preceq_{cw}\alpha$ imply $\beta\in\mathcal D$.
\end{definition}

 \begin{definition}[{\cite[Definition~2.7]{BKZZ-2025}}]\label{def:twin}
    Let $\mathcal{D}\subset \N^\ell$ be a down-set with respect to $\preceq_{cw}$ containing $\epsilon_1,\ldots,\epsilon_\ell$, and let $\le$ be a monomial order on $\N^\ell$. 
    A commutative association scheme $\mathfrak{X}=(X,\mathcal{R})$ is called \emph{$\ell$-variate $P$-polynomial}
    on the domain $\mathcal{D}$ with respect to $\le$
    if the following two conditions are satisfied:
    \begin{enumerate}[label=$(\roman*)$]
        \item there exists a relabeling $\{A_\alpha\}_{\alpha\in\mathcal D}$
        of the adjacency matrices of $\mathfrak{X}$
        such that, for $\alpha\in \mathcal{D}$,
        \[A_\alpha=v_\alpha(A_{\epsilon_1},A_{\epsilon_2},\ldots ,A_{\epsilon_\ell}), \]
        where $v_\alpha(\bm{x})$ is an $\ell$-variate polynomial
        of multidegree $\alpha$ with respect to $\le$
        and all monomials $\bm{x}^\beta$ in $v_\alpha(\bm{x})$ satisfy $\beta \in \mathcal{D}$;
        \item for $i=1,2,\ldots ,\ell$ and $\alpha=(n_1,n_2,\ldots,n_\ell)\in \mathcal{D}$,
        the product
        $A_{\epsilon_i}\cdot A_{\epsilon_1}^{n_1}A_{\epsilon_2}^{n_2}\cdots A_{\epsilon_\ell}^{n_\ell}$
        is a linear combination of
        \[
         \{A_{\epsilon_1}^{m_1}A_{\epsilon_2}^{m_2}\cdots A_{\epsilon_\ell}^{m_\ell} \mid \beta =(m_1,m_2,\ldots ,m_\ell)\in \mathcal{D},\  \beta \le  \alpha +\epsilon_i\}.   
        \]
    \end{enumerate}
\end{definition}

\begin{definition}[{\cite[Definition~2.8]{BKZZ-2025}}]\label{def:expertise}
    Let $\mathcal{D}^\ast \subset \N^\ell$
    be a down-set with respect to $\preceq_{cw}$ containing $\epsilon_1,\ldots,\epsilon_\ell$,  
    and let $\le$ be a monomial order on $\N^\ell$.
    A commutative association scheme $\mathfrak{X}=(X,\mathcal{R})$
    is called \emph{$\ell$-variate $Q$-polynomial}
    on the domain $\mathcal{D}^\ast$ with respect to $\le$
    if the following two conditions are satisfied:
    \begin{enumerate}[label=$(\roman*')$]
        \item there exists a relabeling $\{E_\alpha\}_{\alpha\in\mathcal D^\ast}$
        of the primitive idempotents
        such that, for $\alpha\in \mathcal{D}^\ast$,
        \[
            |X| E_\alpha=v^\ast_\alpha(|X| E_{\epsilon_1},|X| E_{\epsilon_2},\ldots ,|X| E_{\epsilon_\ell})
            \ \text{(under the Hadamard product),}
        \]
        where $v^\ast_\alpha(\bm{x})$ is an $\ell$-variate polynomial
        of multidegree $\alpha$ with respect to $\le$
        and all monomials $\bm{x}^\beta$ in $v^\ast_\alpha(\bm{x})$ satisfy $\beta \in \mathcal{D}^\ast$;
        \item for $i=1,2,\ldots ,\ell$ and $\alpha=(n_1,n_2,\ldots,n_\ell)\in \mathcal{D}^\ast$,
        the product
        $E_{\epsilon_i}\circ E_{\epsilon_1}^{\circ n_1}\circ E_{\epsilon_2}^{\circ n_2}\circ \cdots  \circ E_{\epsilon_\ell}^{\circ n_\ell}$
        is a linear combination of
        \[
            \{E_{\epsilon_1}^{\circ m_1}\circ E_{\epsilon_2}^{\circ m_2}\circ \cdots  \circ E_{\epsilon_\ell}^{\circ m_\ell} \mid \beta=(m_1,m_2,\ldots ,m_\ell)\in \mathcal{D}^\ast,\  \beta\le  \alpha+\epsilon_i\}.   
        \]
    \end{enumerate}
\end{definition}
Let $\mathfrak X=(X,\mathcal R)$ be a commutative association scheme with
adjacency matrices $\{A_\alpha\}_{\alpha\in\mathcal D}$ and primitive
idempotents $\{E_\alpha\}_{\alpha\in\mathcal D^\ast}$.  The intersection
numbers $p_{\alpha,\beta}^{\gamma}$ and Krein numbers
$q_{\alpha,\beta}^{\gamma}$ are defined by
\[A_\alpha A_\beta
 =
 \sum_{\gamma\in\mathcal D}
 p_{\alpha,\beta}^{\gamma}A_\gamma, \qquad 
 E_\alpha\circ E_\beta
 =
 \frac{1}{|X|}\sum_{\gamma\in\mathcal D^\ast}
 q_{\alpha,\beta}^{\gamma}E_\gamma.\]
The next two lemmas characterize the $\ell$-variate $P$- and
$Q$-polynomial properties in terms of intersection numbers and Krein
numbers, respectively.
\begin{lemma}[{\cite[Proposition~2.15]{BKZZ-2025}}]\label{lem:systematic}
Let $\mathcal{D}\subset \N^\ell$ be a down-set with respect to $\preceq_{cw}$ containing $\epsilon_1,\ldots,\epsilon_\ell$, and let $\le$ be a monomial order on $\N^\ell$.
Let $\mathfrak{X}$
be a commutative association scheme with the adjacency matrices $\{A_\alpha\}_{\alpha\in \mathcal{D}}$.
Then the following two statements are equivalent:
\begin{enumerate}
    \item $\mathfrak{X}$ is
    an $\ell$-variate $P$-polynomial association scheme 
    on $\mathcal{D}$ with respect to  $\le$;
    \item 
    the intersection numbers satisfy,
    for each $i=1,2,\ldots ,\ell$ and each $\alpha \in \mathcal{D}$,
    $p^\beta_{\epsilon_i, \alpha} \neq 0$ for $\beta\in \mathcal{D}$ implies $\beta \le \alpha+\epsilon_i$. 
    Moreover, if $\alpha+\epsilon_i\in \mathcal{D}$, then $p^{\alpha+\epsilon_i}_{\epsilon_i, \alpha} \neq 0$ holds.
\end{enumerate}    
\end{lemma}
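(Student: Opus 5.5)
Lemma~\ref{lem:systematic} is the equivalence from \cite[Proposition~2.15]{BKZZ-2025}. We prove the two implications separately, working in the Bose--Mesner algebra and writing $A_{\epsilon_1}^{n_1}\cdots A_{\epsilon_\ell}^{n_\ell}=\bm A^{\alpha}$.

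\emph{(ii)$\Rightarrow$(i).} We argue by induction on $\alpha\in\mathcal D$ along the well-order $\le$. The claim at $\alpha$ is
\[
A_\alpha=c_\alpha\bm A^\alpha+\sum_{\beta\in\mathcal D,\ \beta<\alpha}c_{\alpha\beta}\bm A^\beta,\qquad c_\alpha\neq0,
\]
and, equivalently, that $\bm A^\alpha$ is a combination of the $A_\beta$ with $\beta\le\alpha$ and $\beta\in\mathcal D$.

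For $\alpha=o$ we have $A_o=I$. For $\alpha\neq o$, choose $i$ with $\alpha-\epsilon_i\in\mathcal D$; this is possible because $\mathcal D$ is a down-set. Then compute
\[
A_{\epsilon_i}A_{\alpha-\epsilon_i}=\sum_\beta p^\beta_{\epsilon_i,\alpha-\epsilon_i}A_\beta .
\]
By (ii), every term on the right has $\beta\le\alpha$, and the coefficient of $A_\alpha$ is nonzero. We solve this identity for $A_\alpha$. The other $A_\beta$ have $\beta<\alpha$, so the induction hypothesis writes them as polynomials in lower monomials. On the left, substituting the expansion of $A_{\alpha-\epsilon_i}$ gives $A_{\epsilon_i}\bm A^{\gamma}$ with $\gamma\le\alpha-\epsilon_i$, which equals $\bm A^{\gamma+\epsilon_i}$ with $\gamma+\epsilon_i\le\alpha$ by monomial-order compatibility.

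One point needs care: every monomial that appears must lie in $\mathcal D$. A monomial $\bm A^{\gamma+\epsilon_i}$ with $\gamma+\epsilon_i\notin\mathcal D$ has to be rewritten. We do this using the fact that $A_{\epsilon_i}A_\gamma$ is itself a combination of $A_\beta$ with $\beta\le\gamma+\epsilon_i$ and $\beta\in\mathcal D$. Here $\beta<\gamma+\epsilon_i$ strictly, since $\gamma+\epsilon_i\notin\mathcal D$, and $\gamma+\epsilon_i\le\alpha$; so the induction hypothesis applies to each such $A_\beta$.

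Condition (ii) of Definition~\ref{def:twin} then follows by the same expansion applied to $A_{\epsilon_i}\bm A^\alpha$. The result involves only $A_\beta$ with $\beta\le\alpha+\epsilon_i$, and each of these re-expands into monomials $\bm A^{\beta'}$ with $\beta'\in\mathcal D$ and $\beta'\le\beta$.

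\emph{(i)$\Rightarrow$(ii).} Since the leading coefficient of $v_\alpha$ is nonzero, the triangular system in (i) can be inverted. Using condition (ii) of the definition, this gives by induction that $\operatorname{span}\{\bm A^\beta:\beta\in\mathcal D,\ \beta\le\gamma\}=\operatorname{span}\{A_\beta:\beta\in\mathcal D,\ \beta\le\gamma\}$ for every $\gamma$. Now consider $A_{\epsilon_i}A_\alpha=A_{\epsilon_i}v_\alpha(\bm A)$. Each monomial of $v_\alpha$ has the form $\bm A^\beta$ with $\beta\le\alpha$, and $A_{\epsilon_i}\bm A^\beta$ lies in the span for $\beta+\epsilon_i\le\alpha+\epsilon_i$. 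Since the $A_\beta$ are linearly independent, $p^\beta_{\epsilon_i,\alpha}\neq0$ forces $\beta\le\alpha+\epsilon_i$.

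For the nonvanishing statement, suppose $\alpha+\epsilon_i\in\mathcal D$. Expanding $A_{\epsilon_i}A_\alpha$, the monomial $\bm A^{\alpha+\epsilon_i}$ appears with the nonzero leading coefficient of $v_\alpha$, times the coefficient of $\bm A^{\epsilon_i}$ in $A_{\epsilon_i}$. All other monomials are strictly smaller than $\alpha+\epsilon_i$. Because the $\bm A^\beta$ with $\beta\in\mathcal D$ form a basis, $A_{\alpha+\epsilon_i}$ is the unique basis element whose triangular expansion carries $\bm A^{\alpha+\epsilon_i}$. Hence $p^{\alpha+\epsilon_i}_{\epsilon_i,\alpha}\neq0$.

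We expect the main obstacle to be the out-of-domain reduction in (ii)$\Rightarrow$(i): monomials $\bm A^{\gamma+\epsilon_i}$ with $\gamma+\epsilon_i\notin\mathcal D$ must be re-expressed without leaving $\mathcal D$. We handle this with a nested induction on $\le$, which is possible because $\le$ is a well-order.
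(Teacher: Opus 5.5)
Your argument is correct. The paper does not prove this lemma; it quotes it from \cite[Proposition~2.15]{BKZZ-2025}, so there is no in-paper argument to compare with. Your approach (induction along the well-order $\le$ for statement (ii)$\Rightarrow$(i), and triangular inversion plus condition (ii) of \cref{def:twin} for the converse) is the natural one. Three points should be tightened.

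\emph{First}, the base case $A_o=I$ in (ii)$\Rightarrow$(i) is a labelling convention, namely that the diagonal relation is indexed by $o$. It is not a consequence of statement (ii). Take $\ell=1$, $\mathcal D=\{0,1,2\}$, and the Petersen graph with $A_2=I$, $A_1=A$, $A_0=J-I-A$. Then $A_1A_0=2A_1+2A_0$, $A_1^2=3A_2+A_0$ and $A_1A_2=A_1$, so statement (ii) holds. Statement (i) fails, since it would force $A_0$ to be a scalar matrix. State explicitly that you assume this convention; under (i) it is automatic.

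\emph{Second}, in the out-of-domain reduction the product to rewrite is $A_{\epsilon_i}\bm A^{\gamma}$, not $A_{\epsilon_i}A_\gamma$. First write $\bm A^\gamma$ as a combination of $A_{\gamma'}$ with $\gamma'\le\gamma$, using the induction hypothesis. Then apply statement (ii) to each $A_{\epsilon_i}A_{\gamma'}$, which lands in $\operatorname{span}\{A_\beta:\beta\in\mathcal D,\ \beta\le\gamma+\epsilon_i\}$. The rest of your reasoning then goes through unchanged.

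\emph{Third}, in the nonvanishing half of (i)$\Rightarrow$(ii), the lower terms $\bm A^{\beta+\epsilon_i}$ with $\beta<\alpha$ may lie outside $\mathcal D$. Before reading off coordinates, reduce them via condition (ii) of \cref{def:twin} to basis monomials $\bm A^{\beta'}$ with $\beta'\in\mathcal D$ and $\beta'<\alpha+\epsilon_i$. Also, $A_{\alpha+\epsilon_i}$ is not the unique $A_\beta$ whose expansion contains $\bm A^{\alpha+\epsilon_i}$, since larger $A_\gamma$ may contain it too. It is unique only among those with $\beta\le\alpha+\epsilon_i$. That suffices, because by the first half these are the only terms occurring in $A_{\epsilon_i}A_\alpha$. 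Comparing $\bm A^{\alpha+\epsilon_i}$-coordinates then shows that $p^{\alpha+\epsilon_i}_{\epsilon_i,\alpha}$ times the leading coefficient of $v_{\alpha+\epsilon_i}$ equals the leading coefficient of $v_\alpha$, which is nonzero.

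Incidentally, the span equality you invoke needs neither induction nor condition (ii) of \cref{def:twin}. The $|\{\beta\in\mathcal D:\beta\le\gamma\}|$ linearly independent matrices $A_\beta$ already lie in the span of equally many monomials, so the two spans coincide.
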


\begin{lemma}[{\cite[Proposition~2.16]{BKZZ-2025}}]
    \label{prop:substitute}
    Let $\mathcal{D}^\ast\subset \N^\ell$ be a down-set with respect to $\preceq_{cw}$ containing $\epsilon_1,\ldots,\epsilon_\ell$, and let $\le$ be a monomial order on $\N^\ell$.
Let $\mathfrak{X}$ be a commutative association scheme
    with the primitive idempotents $\{E_\alpha\}_{\alpha\in \mathcal{D}^\ast}$.
    Then the  following two statements are equivalent:
    \begin{enumerate}[label=$(\mathrm{\roman*})$]
        \item $\mathfrak{X}$ is
        an $\ell$-variate $Q$-polynomial association scheme 
        on $\mathcal{D}^\ast$ with respect to $\le$;
        \item 
         the Krein numbers satisfy,
        for each $i=1,2,\ldots ,\ell$ and each $\alpha \in \mathcal{D}^\ast$,
        $q^\beta_{\epsilon_i, \alpha} \neq 0$ for $\beta\in \mathcal{D}^\ast$ implies $\beta \le \alpha+\epsilon_i$. 
        Moreover, if $\alpha+\epsilon_i\in \mathcal{D}^\ast$, then $q^{\alpha+\epsilon_i}_{\epsilon_i, \alpha} \neq 0$ holds.
    \end{enumerate}
\end{lemma}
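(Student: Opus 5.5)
The proof is a triangularity argument in the Bose--Mesner algebra under the Hadamard product, dual to the proof of \cref{lem:systematic}. Write $F_j=|X|E_{\epsilon_j}$ and, for $\beta=(m_1,\ldots,m_\ell)\in\N^\ell$, set $F^{\circ\beta}=F_1^{\circ m_1}\circ\cdots\circ F_\ell^{\circ m_\ell}$. Here $F^{\circ o}=J=|X|E_o$ is the Hadamard identity. Since $\mathcal D^\ast$ is finite and $\le$ is a total well-order, we may do induction along $\le$ restricted to $\mathcal D^\ast$. The key tool is the cancellation property of a monomial order: if $\beta<\alpha$ then $\beta+\epsilon_i<\alpha+\epsilon_i$. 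For $\gamma\in\N^\ell$ let $\mathcal S_\gamma=\operatorname{span}\{E_\beta\mid\beta\in\mathcal D^\ast,\ \beta\le\gamma\}$.

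\emph{(i)$\Rightarrow$(ii).} Condition (i$'$) says $|X|E_\alpha=c_\alpha F^{\circ\alpha}+\sum_{\beta<\alpha,\ \beta\in\mathcal D^\ast}c_{\alpha,\beta}F^{\circ\beta}$ with $c_\alpha\neq0$, since $\alpha$ is the multidegree. This is a unitriangular change of basis up to the nonzero diagonal, so by induction along $\le$ we get, for every $\gamma\in\mathcal D^\ast$,
\[
\mathcal S_\gamma=\operatorname{span}\{F^{\circ\beta}\mid\beta\in\mathcal D^\ast,\ \beta\le\gamma\}.
\]
In the inverse expression, $F^{\circ\alpha}$ has coefficient $c_\alpha^{-1}\neq0$ on $|X|E_\alpha$.

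Now expand $E_{\epsilon_i}\circ E_\alpha$ using (i$'$) for $\alpha$ and apply (ii$'$) to each term $F_i\circ F^{\circ\beta}$ with $\beta\le\alpha$. Each term lands in the span of $F^{\circ\gamma}$ with $\gamma\in\mathcal D^\ast$ and $\gamma\le\beta+\epsilon_i\le\alpha+\epsilon_i$. By the displayed equality this span lies in $\mathcal S_{\alpha+\epsilon_i}$. Hence $q^\beta_{\epsilon_i,\alpha}\neq0$ forces $\beta\le\alpha+\epsilon_i$.

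For the nonvanishing statement, assume $\alpha+\epsilon_i\in\mathcal D^\ast$. The leading term contributes $c_\alpha F^{\circ(\alpha+\epsilon_i)}$. Every other term $F_i\circ F^{\circ\beta}$ with $\beta<\alpha$ lies in $\operatorname{span}\{F^{\circ\gamma}\mid\gamma\le\beta+\epsilon_i\}$, and $\beta+\epsilon_i<\alpha+\epsilon_i$ by cancellation, so it cannot produce $E_{\alpha+\epsilon_i}$. The coefficient of $E_{\alpha+\epsilon_i}$ is therefore $c_\alpha c_{\alpha+\epsilon_i}^{-1}\neq0$, up to the $|X|$ normalisations.

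\emph{(ii)$\Rightarrow$(i).} The main claim, proved by induction along $\le$ on $\alpha\in\mathcal D^\ast$, is that $F^{\circ\alpha}\in\mathcal S_\alpha$ with nonzero coefficient on $E_\alpha$. The base case is $F^{\circ o}=|X|E_o$.

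For $\alpha\neq o$, choose $i$ with $n_i>0$; the down-set property gives $\alpha-\epsilon_i\in\mathcal D^\ast$. By induction,
\[
F^{\circ\alpha}=F_i\circ F^{\circ(\alpha-\epsilon_i)}=\sum_{\beta\le\alpha-\epsilon_i}a_\beta\,|X|E_{\epsilon_i}\circ E_\beta,
\qquad a_{\alpha-\epsilon_i}\neq0 .
\]
By (ii), each $E_{\epsilon_i}\circ E_\beta$ lies in $\mathcal S_{\beta+\epsilon_i}\subseteq\mathcal S_\alpha$. Only $\beta=\alpha-\epsilon_i$ can reach $E_\alpha$, because $\beta<\alpha-\epsilon_i$ implies $\beta+\epsilon_i<\alpha$. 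The coefficient of $E_\alpha$ is thus $a_{\alpha-\epsilon_i}\,q^{\alpha}_{\epsilon_i,\alpha-\epsilon_i}\neq0$, using the nonvanishing part of (ii).

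This triangularity lets us invert, again by induction along $\le$. We obtain $|X|E_\alpha$ as a polynomial in the $F^{\circ\beta}$ with $\beta\in\mathcal D^\ast$, $\beta\le\alpha$, and nonzero coefficient on $F^{\circ\alpha}$, which is (i$'$) with multidegree $\alpha$. The same triangularity gives the span equality displayed above. Condition (ii$'$) then follows: $F_i\circ F^{\circ\alpha}\in\sum_{\beta\le\alpha}\mathcal S_{\beta+\epsilon_i}\subseteq\mathcal S_{\alpha+\epsilon_i}$, and this space equals the span of the $F^{\circ\gamma}$ with $\gamma\in\mathcal D^\ast$, $\gamma\le\alpha+\epsilon_i$. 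This holds even when $\alpha+\epsilon_i\notin\mathcal D^\ast$.

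The main obstacle is the bookkeeping in these triangularity arguments: at each step one must confirm that lower-order terms cannot contribute to the top coefficient. This rests on cancellation in the monomial order and on the down-set hypothesis, which guarantees that $\alpha-\epsilon_i\in\mathcal D^\ast$ whenever $n_i>0$.
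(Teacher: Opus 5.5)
Your triangularity argument along $\le$ is correct, and it is the standard proof of this equivalence. The paper gives no proof of its own here and only cites \cite[Proposition~2.16]{BKZZ-2025}.

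The one point to make explicit is your base case $F^{\circ o}=|X|E_o$ in (ii)$\Rightarrow$(i). It is the standing convention $E_o=\frac{1}{|X|}J$. This convention is forced by (i), but not by (ii) alone. For $\ell=1$, a symmetric $2$-class scheme with the trivial idempotent labelled $2$ can satisfy (ii) without satisfying (i). So the convention should be stated as part of the hypotheses.
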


\begin{remark}
 The shell $X_0$ of the Grassmann scheme $\operatorname{Gr}_q(a+b,a)$ is a singleton. The furthest shell $X_a$ consists of all $a$-dimensional subspaces that intersect $x_0$ trivially, and the association scheme induced on $X_a$ is the one from attenuated spaces. 
 When $a<b$, this scheme is bivariate $P$-polynomial and bivariate $Q$-polynomial, as proved in \cite{BKZZ-2025,BKZZ-2024,BCVZZ-2025}. 
 When $a=b$, the scheme induced on $X_a$ is isomorphic to the bilinear-forms scheme $H_q(a,a)$, which is univariate $P$-polynomial and univariate $Q$-polynomial.   
\end{remark}

Therefore, in the rest of the paper, we focus on the interior shells $X_i$ of the Grassmann scheme $\Gr_q(a+b,a)$, where $1\leq i\leq a-1$.

To obtain the trivariate $P$- and $Q$-polynomial structures, we introduce a special order $\le_{ijk}$ as follows.
Define the lexicographic order by $\alpha\le_{\mathrm{lex}}\beta$ if
$\alpha=\beta$ or the leftmost nonzero entry of
$\alpha-\beta\in\Z^\ell$ is negative.
For triple case, we define the graded lexicographic order $\le_{ijk}$ on $\N^3$ with a permutation $(i,j,k)$ of $(1,2,3)$ as follows:
$(x_1,x_2,x_3)\le_{ijk}(y_1,y_2,y_3)$ if either
$x_1+x_2+x_3<y_1+y_2+y_3$, or the two sums are equal and
$(x_i,x_j,x_k)\le_{\mathrm{lex}}(y_i,y_j,y_k)$.

\section[The trivariate P-polynomial structure]{The trivariate $P$-polynomial structure}\label{sect:criticism}
In this section, we prove that each interior shell of the Grassmann scheme is trivariate $P$-polynomial using combinatorial interpretations of the intersection numbers.

Recall that the relations of the association scheme on $X_i$ are indexed by $ \mathcal I_i$. 
We map $(v,w,z)$ to the new coordinates as
$x=a-i-v$, $y=i-w$
and define
\begin{equation}\label{eqn:circulate}
\mathcal D_i
 =
 \left\{
 (x,y,z)\in\mathbb N^3 \mid
 (a-i-x,i-y,z)\in\mathcal I_i
 \right\}.
\end{equation}
Equivalently, we have
\[
 \mathcal D_i=
 \left\{
 (x,y,z)\in\mathbb N^3\mid
 0\leq x\leq\min\{a-i,i\},
 0\leq y\leq\min\{i,b-i\},
 0\leq z\leq i-\max\{x,y\}
 \right\}.
\]
For $\alpha=(x,y,z)\in\mathcal D_i$, put $\tau(\alpha)=(a-i-x,i-y,z)\in\mathcal I_i$. 
We relabel the relations and their adjacency matrices by
\[
 \widehat R_\alpha=R^{(i)}_{\tau(\alpha)},
 \qquad
 \widehat A_\alpha=A_{\tau(\alpha)}.
\]
Thus $\widehat R_{(0,0,0)}$ is the diagonal relation. 
Denote $\widehat k^{(i)}_\alpha=k^{(i)}_{\tau(\alpha)}$.
For $\alpha,\beta,\gamma\in\mathcal D_i$, define the relabelled
intersection numbers $\widehat p_{\alpha,\beta}^{\gamma}$ by $\widehat A_\alpha\widehat A_\beta
 =
 \sum_{\gamma\in\mathcal D_i}
 \widehat p_{\alpha,\beta}^{\gamma}\widehat A_\gamma.$
If we write $p_{\alpha,\beta}^{\gamma}$ for the intersection numbers with
respect to $\{A_\gamma\}_{\gamma\in\mathcal I_i}$, then we have $\widehat p_{\alpha,\beta}^{\gamma}
 =p_{\tau(\alpha),\tau(\beta)}^{\tau(\gamma)}.$

The following result is the main theorem of this section.
\begin{theorem}\label{thm:laboratory}
For $1\leq i\leq a-1$, the association scheme induced on the $i$-th shell $X_i$ of the Grassmann scheme is trivariate $P$-polynomial on $\mathcal D_i$ with respect to $\le_{123}$.
\end{theorem}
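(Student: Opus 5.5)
We will verify condition (ii) of \cref{lem:systematic} for the relabelled intersection numbers $\widehat p_{\epsilon_j,\alpha}^{\beta}$, $j=1,2,3$, on the domain $\mathcal D_i$. First, $\mathcal D_i$ is a down-set with respect to $\preceq_{cw}$: each defining inequality ($x\le\min\{a-i,i\}$, $y\le\min\{i,b-i\}$, $z\le i-\max\{x,y\}$) is preserved when coordinates decrease. It also contains $\epsilon_1,\epsilon_2,\epsilon_3$, since $1\le i\le a-1$ and $i\le a\le b$, so $b-i\ge1$ and $i-1\ge0$.

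Next, we interpret the coordinates geometrically. For $P=(U,W,T)$ and $P'=(U',W',T')$ with $(P,P')\in\widehat R_{(x,y,z)}$, we have $x=a-i-\dim(U\cap U')$ and $y=i-\dim(W\cap W')$. Thus $x$ is the distance of $U,U'$ in the Grassmann graph of $(a-i)$-subspaces of $\F_q^a$, and $y$ is the distance of $W,W'$ among $i$-subspaces of $\F_q^b$. The coordinate $z$ is the rank of $\Theta_{P;P'}-\Theta_{P';P}$, a map $D_{P,P'}\to\F_q^a/(U+U')$. The three generators are as follows.
\begin{itemize}
\item $\widehat R_{\epsilon_1}$: $W=W'$, $\dim(U\cap U')=a-i-1$ and $z=0$.
\item $\widehat R_{\epsilon_2}$: $U=U'$, $\dim(W\cap W')=i-1$ and $z=0$.
\item $\widehat R_{\epsilon_3}$: $U=U'$, $W=W'$ and $\operatorname{rank}(T-T')=1$.
\end{itemize}
The key step is a ``triangle inequality'' lemma. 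If $(P_0,P)\in\widehat R_\alpha$ with $\alpha=(x,y,z)$ and $(P,P')\in\widehat R_{\epsilon_j}$, then $\beta=(x',y',z')$, the index of $(P_0,P')$, satisfies the following.
\begin{itemize}
\item For $j=1$: $|x'-x|\le1$, $y'=y$ and $|z'-z|\le1$.
\item For $j=2$: $x'=x$, $|y'-y|\le1$ and $|z'-z|\le1$.
\item For $j=3$: $x'=x$, $y'=y$ and $|z'-z|\le1$.
\end{itemize}
Moreover, in every case $\beta$ has coordinate sum at most $x+y+z+1$. The $x,y$ parts are the usual Grassmann distance bounds. For $z$, we use that changing $U$ by one dimension changes $U+U_0$, and hence the quotient $\F_q^a/(U+U_0)$, by at most one dimension. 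Changing $W$ by one dimension changes $D=W\cap W_0$ by at most one dimension. Changing $T$ by a rank-one map changes the rank of $\Theta$ by at most one. Each of these perturbs the rank $z$ by at most $1$, using the subadditivity of rank under restriction and projection.

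With this lemma, the ordering condition reduces to checking $\beta\le_{123}\alpha+\epsilon_j$ whenever $\widehat p^{\beta}_{\epsilon_j,\alpha}\ne0$. The problematic cases are those where the degree of $\beta$ equals $x+y+z+1$ but $\beta\ne\alpha+\epsilon_j$. For $j=1$ this would require $x'=x+1$ and $z'=z+1$, which has degree $x+y+z+2$ and so contradicts the lemma. Hence either the degree is smaller, or $\beta=\alpha+\epsilon_1$, or $\beta=(x,y,z+1)$, which is lex-smaller than $(x+1,y,z)$. For $j=2$, a degree-$(x+y+z+1)$ candidate other than $\alpha+\epsilon_2$ is $(x,y,z+1)$, which is lex-smaller than $(x,y+1,z)$, so it is fine. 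For $j=3$, only $\alpha+\epsilon_3$ has the top degree.

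The main obstacle is therefore the sharper claim in the lemma that $x'$ (or $y'$) and $z'$ cannot both increase. I would prove this by a careful case analysis. When $U'$ is farther from $U_0$ than $U$, the space $U'+U_0$ is strictly larger than $U+U_0$, so the target quotient shrinks; this should prevent the rank from increasing simultaneously. The analogous argument applies to $W$ versus $D$. The domain bound $z\le i-\max\{x,y\}$ is consistent with this trade-off, and I would use the counting formula for $k^{(i)}$ as a sanity check.

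Finally, for nonvanishing when $\alpha+\epsilon_j\in\mathcal D_i$, we construct $P'$ explicitly. For $j=1$, replace a vector of $U$ lying in $U\cap U_0$ by one outside $U+U_0$, keeping $W$ and keeping $T$ compatible, so that the rank is preserved. For $j=2$, similarly modify $W$ away from $W_0$. For $j=3$, add to $T$ a rank-one map on $D$ whose image lies outside the current image of $\Theta$; this is possible because $z<\min\{w,c_i(v)\}$.
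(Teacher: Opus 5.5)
\textbf{Verdict: the route is right, but the central step is unproved and the $j=1$ positivity construction, as stated, fails.}

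\medskip
\noindent\textbf{The support lemma.} Your outline is the paper's. You check the down-set property, prove a support lemma whose real content is that $x$ (resp.\ $y$) and $z$ cannot both increase, check the order, and exhibit explicit triangles for positivity. Your reduction of the order check to excluding $(x+1,y,z+1)$ and $(x,y+1,z+1)$ is correct.

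However, the step you call the main obstacle is never proved, and your heuristic does not carry it. Passing from $P$ to $P'$ changes the map $T$ as well as the target quotient (or the domain $D$). A smaller target says nothing about the rank of a different map. The missing idea is that the generator relations tie $T'$ to $T$.
\begin{itemize}
\item If $(P,P')\in\widehat R_{\epsilon_1}$, then $W'=W$ and $T'\equiv T$ modulo $U+U'$. Suppose $x'=x+1$ and put $S=U\cap U'$. A dimension count gives $S\cap U_0=U'\cap U_0$, hence $S+U_0=U+U_0$. So $U+U_0$ is a hyperplane of $U'+U_0$, and $U+U'\subseteq U'+U_0$. The map $D\to\F_q^a/(U'+U_0)$ defining $z'$ is then the map defining $z$ followed by a quotient with one-dimensional kernel, so $z'\le z$.
\item If $(P,P')\in\widehat R_{\epsilon_2}$, then $U'=U$ and $T'=T$ on $W\cap W'$. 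When $y'=y+1$, the space $W'\cap W_0$ is a hyperplane of $D$ contained in $W\cap W'$. So $z'$ is the rank of a restriction of the map defining $z$.
\item Even $|z'-z|\le 1$ in the cases $x'=x$ or $y'=y$ is not automatic. It needs a comparison through the common quotient $\F_q^a/(U+U'+U_0)$, resp.\ the common subspace $D\cap D'$.
\end{itemize}
This is exactly the case analysis (I)--(III) in \cref{lem:plain}.

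\medskip
\noindent\textbf{Positivity.} Your forward constructions for $j=1,2$ omit the choices where the condition $\alpha+\epsilon_j\in\mathcal D_i$ must enter.
\begin{itemize}
\item For $j=1$ the stated choice fails. By the factorization above, $z'$ does not depend on $T'$ at all, so ``keeping $T$ compatible'' cannot preserve the rank. If the new vector $u'$ lies in $U+U_0+\widetilde T(D)$, you get $z'=z-1$. You must take $u'\notin U+U_0+\widetilde T(D)$. This subspace has dimension $a-i+x+z\le a-1$ precisely because $z\le i-x-1$.
\item For $j=2$ you must choose the hyperplane $W'\cap W_0$ of $D$ so that it does not contain $\ker(\pi_{U+U_0}\circ\widetilde T|_D)$. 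This is possible since $z\le i-y-1<\dim D$. The new vector must also lie outside $W+W_0$.
\item Your $j=3$ construction is fine.
\end{itemize}
The paper's \cref{lem:referral} avoids these choices by working backwards. It starts from $(P_0,P)\in\widehat R_{\alpha+\epsilon_s}$ and builds the intermediate point adjacent to $P_0$, where the required rank comes out automatically.
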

Put $\epsilon_1=(1,0,0)$, $\epsilon_2=(0,1,0)$ and $\epsilon_3=(0,0,1)$.
By \cref{lem:systematic}, we need first to prove the following lemma.
\begin{lemma}
\label{lem:plain}
For $1\leq i\leq a-1$ and
$\alpha\in\mathcal D_i$, let $\mathcal N_3^P(\alpha)=\{\alpha+\eta \epsilon_3 \mid \eta\in \{-1,0,1\}\}$ and 
\[\mathcal N_s^P(\alpha)=\{\alpha+\xi\epsilon_s+\eta \epsilon_3\mid \xi,\eta\in \{-1,0,1\}, ~ \xi \eta\leq 0\},\quad s=1,2. \]
Then, for $s=1,2,3$, the intersection number $\widehat p_{\epsilon_s,\alpha}^{\beta} \neq 0$ implies that $
 \beta\in\mathcal N_s^P(\alpha)\cap\mathcal D_i.$
Consequently, $\widehat p_{\epsilon_s,\alpha}^{\beta}\neq0$ implies that $\beta\le_{123}\alpha+\epsilon_s$.
\end{lemma}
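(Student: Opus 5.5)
The plan is to read the intersection numbers combinatorially. Fix $P_0=(U_0,W_0,0)$ as in the Remark of \cref{sect:minority}, and fix $P=(U,W,T)$ with $(P_0,P)\in\widehat R_\beta$. Since the relations are symmetric, $\widehat p^{\beta}_{\epsilon_s,\alpha}\neq 0$ means there is some $P'=(U',W',T')$ with $(P_0,P')\in\widehat R_{\epsilon_s}$ and $(P',P)\in\widehat R_\alpha$. So it suffices to show the following: whenever $P'$ is an $\epsilon_s$-neighbour of $P_0$, the triple $(x,y,z)$ of the pair $(P_0,P)$ differs from that of $(P',P)$ by a vector in $\{\xi\epsilon_s+\eta\epsilon_3:\ \xi\eta\le 0\}$, with $\xi=0$ when $s=3$. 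Membership $\beta\in\mathcal D_i$ is automatic, since $\beta$ indexes an actual relation.

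First I make the neighbours explicit via \eqref{eqn:return}. An $\epsilon_3$-neighbour has $U'=U_0$, $W'=W_0$, and $T'-T_0$ of rank $1$ on $W_0$ into $\F_q^a/U_0$. An $\epsilon_2$-neighbour has $U'=U_0$, $\dim(W_0\cap W')=i-1$, and $T'$ agreeing with $T_0$ on $W_0\cap W'$ modulo $U_0$. An $\epsilon_1$-neighbour has $W'=W_0$, $\dim(U_0\cap U')=a-i-1$, and $T'\equiv T_0$ on $W_0$ modulo $U_0+U'$.

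\emph{Case $s=3$.} The subspaces are unchanged, so $x$ and $y$ are unchanged. The map $\Theta_{P;P'}-\Theta_{P';P}$ differs from $\Theta_{P;P_0}-\Theta_{P_0;P}$ by the image of a rank-one map, so $z$ changes by at most $1$.

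\emph{Case $s=2$.} Since $W'$ is a hyperplane of $W_0$ inside the $i$-dimensional spaces, $\dim(W\cap W')$ and $\dim(W\cap W_0)$ differ by at most one, so $|\Delta y|\le 1$. The domain $D=W\cap W'$ versus $W\cap W_0$ changes by at most one dimension, and on the common part the maps agree modulo $U_0+U$. Hence the rank changes by at most one. The coupling $\xi\eta\le 0$ should come from a direct comparison. If $y$ increases, then $D$ shrinks, so the difference map is a restriction and $z$ cannot increase. If $y$ decreases, the reverse inclusion gives that $z$ cannot decrease.

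\emph{Case $s=1$.} This is the same argument on the target side. Because $|(U+U')/(U+U_0)|$-type changes are at most one-dimensional, $\dim(U\cap U')$ differs from $\dim(U\cap U_0)$ by at most one. The target $\F_q^a/(U+U')$ is a quotient of, or surjects onto, a common quotient with $\F_q^a/(U+U_0)$ up to one dimension, and $T'\equiv T_0$ there. If $x$ increases then $U+U'$ strictly grows, so the difference map is obtained by composing with a further quotient and $z$ can only drop. In the opposite direction $z$ can only rise.

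The last claim is then a direct check against $\le_{123}$. The elements $\alpha+\epsilon_s-\epsilon_3$, $\alpha-\epsilon_s$, $\alpha-\epsilon_3$ and $\alpha$ have smaller total degree than $\alpha+\epsilon_s$, or equal it. The element $\alpha+\epsilon_3$ has the same total as $\alpha+\epsilon_s$ and is lex-smaller, since its first differing coordinate, at position $s\le 2$, is smaller. Likewise $\alpha-\epsilon_s+\epsilon_3$ has the same total and is lex-smaller at position $s$.

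I expect the main obstacle to be the sign coupling in the case $s=1$. There both $\Theta$-maps are taken modulo a varying subspace $U+U'$, and $U'$ can meet $U$ differently from $U_0$. I would handle it by choosing a common subspace $U_0\cap U'$ and factoring both difference maps through $\F_q^a/(U+U_0+U')$, which differs from each target by at most one dimension.
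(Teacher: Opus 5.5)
Your proposal is correct and follows essentially the paper's proof: the same case split on $s$, the rank-one perturbation for $s=3$, and the restriction argument for $s=2$. For $s=1$, your common quotient $\F_q^a/(U+U_0+U')$ is exactly the paper's $\mathcal H/(L_0+L_1)$ with $\mathcal H=\F_q^a/(U+(U_0\cap U'))$. Since $U+U_0$ and $U+U'$ both have codimension at most one in $U+U_0+U'$, when $x$ changes the larger of the two equals $U+U_0+U'$, and this gives your sign coupling.

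The remaining differences are cosmetic. You read $\widehat p^{\beta}_{\epsilon_s,\alpha}$ directly instead of swapping via valencies, and you do not normalize $T'$ to $0$ for $s=2$, which is indeed unnecessary because $T'$ vanishes on $W_0\cap W'$. There are also two harmless slips: it is $W_0\cap W'$, not $W'$, that is a hyperplane of $W_0$; and $\alpha-\epsilon_s+\epsilon_3$ has total degree $|\alpha|$, strictly below that of $\alpha+\epsilon_s$, so the order comparison is even easier than you state.
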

\begin{proof}
Any point in
$X_i=X_{a-i,i}$ is represented by a triple $ P=(U,W,T)$ satisfying
\[
 \dim U=a-i,\qquad
 \dim W=i,\qquad
 T: W\longrightarrow\F_q^a/U.
\]
Write $\alpha=(x,y,z)$.  Fix two points $P_0=(U_0,W_0,0)$ and $P=(U,W,T)$ from $X_i$ with $(P_0,P)\in\widehat R_\alpha$.
Then
\[
 x=a-i-\dim(U\cap U_0),
 \qquad
 y=i-\dim(W\cap W_0), 
 \qquad 
 z= \operatorname{rank}(\Theta_{P;P_0}).
\]
For $s\in\{1,2,3\}$, suppose that
$\widehat p_{\epsilon_s,\beta}^{\alpha}\neq0$, where
$\beta=(x',y',z')$.  By the definition of the intersection numbers,
there exists a point $P_1=(U_1,W_1,T_1)\in X_i$ such that $(P_0,P_1)\in\widehat R_{\epsilon_s}$ and $(P_1,P)\in\widehat R_\beta.$
More precisely,
\[
 x'=a-i-\dim(U_1\cap U),
 \quad
 y'=i-\dim(W_1\cap W),
\quad
 z'=
 \operatorname{rank}\left(\Theta_{P_1;P}-\Theta_{P;P_1}
 \right).
\]
We determine the possible values of $\beta$ for each $s$.  
In each of the following three cases, we choose a linear lift
$\widetilde T:W\longrightarrow\F_q^a$ of the map $T$, if necessary.
Moreover, for a subspace $B\leq\F_q^a$,
we use the notation $\pi_B:\F_q^a\longrightarrow\F_q^a/B$ to denote the natural quotient map.\\[0.1in]
{\bf{We first consider $\epsilon_1=(1,0,0)$. }} 
In this case,
\[
 W_1=W_0,
 \qquad
 S_U:=U_0\cap U_1,
 \qquad
 \dim S_U=a-i-1,
\qquad
 \operatorname{Im}T_1
 \subseteq (U_0+U_1)/U_1.
\]
Indeed, the quotient $(U_0+U_1)/U_1$ is the kernel of the natural map $\F_q^a/U_1\longrightarrow\F_q^a/(U_0+U_1)$.
Since the natural map $U_0\longrightarrow (U_0+U_1)/U_1$
is surjective, the map $T_1:W_0\longrightarrow (U_0+U_1)/U_1$ has a linear lift $\widetilde T_1:W_0\longrightarrow U_0.$
Since $W_1=W_0$, we have $y'=y$.  

To analyze the relation between $z$ and $z'$, we represent the three maps $\Theta_{P;P_0}$, $\Theta_{P_1;P}$,  $\Theta_{P;P_1}$ as maps into a common quotient space through suitable lifts and quotient maps.
Let
\[
 D=W\cap W_0,
 \qquad
 \mathcal H=\F_q^a/(U+S_U),
 \qquad
 L_j=(U+U_j)/(U+S_U)\leq\mathcal H
 \quad (j=0,1).
\]
Using the natural quotient map $\pi_{U+S_U}:\F_q^a\longrightarrow\mathcal H$, the lifts $\widetilde T$ and $\widetilde T_1$ induce linear maps
\[
 M=\pi_{U+S_U}\circ\widetilde T|_D:
 D\longrightarrow\mathcal H
,\qquad 
 M_1=\pi_{U+S_U}\circ\widetilde T_1|_D:
 D\longrightarrow\mathcal H.
\]
Since $\operatorname{Im}\widetilde T_1\subseteq U_0$ and $U_0+(U+S_U)=U_0+U$, 
we have
\[
 \operatorname{Im}M_1\subseteq \pi_{U+S_U}(U_0)=
 (U+U_0)/(U+S_U)=L_0.
\]
For $j=0,1$, let $\rho_j:\mathcal H\longrightarrow\mathcal H/L_j$ be the natural quotient map.  
Under the natural identification $\mathcal H/L_j\cong\F_q^a/(U+U_j)$, we have  
\[\Theta_{P;P_0}=\rho_0\circ M,\qquad \Theta_{P_1;P}=\rho_1\circ M_1,\qquad \Theta_{P;P_1}=\rho_1\circ M,\]
 \cref{eqn:return} gives $z=\operatorname{rank}(-\rho_0\circ M)$ and $z'=\operatorname{rank}\bigl(\rho_1 \circ M_1-\rho_1\circ M\bigr)$.
 For $j=0,1$, let $\delta_j=
 \dim(U\cap U_j)-\dim(U\cap S_U)\in\{0,1\}$.
 Then $x'-x=\delta_0-\delta_1$
 and 
\[\dim L_j=\dim(U+U_j)-\dim(U+S_U)=\dim U_j-\dim S_U-\dim(U\cap U_j)+\dim(U\cap S_U)=1-\delta_j.\]
Hence $x'-x\in\{-1,0,1\}$.
\begin{enumerate}[label={\rm(\Roman*)}]
    \item  $x'=x+1$, i.e., $L_0=0$, $\dim L_1=1.$\\
Since $\operatorname{Im}M_1\subseteq L_0$, we have $M_1=0$.
Thus $z=\operatorname{rank}M$ and $z'=\operatorname{rank}(\rho_1\circ M)$.
The map $\rho_1$ is a quotient map with one-dimensional kernel.
Consequently, $ z'\in\{z-1,z\}.$

\item $x'=x-1$, i.e., $\dim L_0=1$ and $L_1=0$.\\
Since $\operatorname{Im}M_1\subseteq L_0=\ker\rho_0$, we have $\rho_0\circ M_1=0$, and hence $\rho_0\circ M=\rho_0\circ(M-M_1).$
Thus $z=\operatorname{rank}(\rho_0\circ(M-M_1))$ and $z'=\operatorname{rank}(M-M_1)$.
As $\rho_0$ has a one-dimensional kernel, it follows that $ z'\in\{z,z+1\}.$
\item $x'=x$, i.e., $\dim L_0=\dim L_1$.\\
 Let $\rho:\mathcal H\longrightarrow
 \mathcal H/(L_0+L_1)$ be the natural quotient map.  
 For $j=0,1$, the natural maps 
$\mathcal H/L_j\longrightarrow
 \mathcal H/(L_0+L_1)$
 have kernels of dimension at most one. 
Hence
\[
 \operatorname{rank}(\rho\circ M)\leq z\leq
 \operatorname{rank}(\rho\circ M)+1,
 \quad
 \operatorname{rank}(\rho\circ(M-M_1))\leq z'\leq
 \operatorname{rank}(\rho\circ(M-M_1))+1.
\]
Since $\operatorname{Im}M_1\subseteq L_0\subseteq L_0+L_1=\ker\rho$, 
we have $\rho\circ M_1=0$, and hence $\rho\circ M=\rho\circ(M-M_1).$
Therefore,  $z'\in\{z-1,z,z+1\}.$
\end{enumerate}
{\bf{We next consider $\epsilon_2=(0,1,0)$.}}    
In this case,
\[
 U_1=U_0,
 \qquad
 S_W:=W_0\cap W_1,
 \qquad
 \dim S_W=i-1.
\]
Since $U_1=U_0$ and the third coordinate of $\epsilon_2$ is zero,
we have $x'=x$ and $T_1|_{S_W}=0$.

We claim that, up to the action of the stabilizer of $P_0$ in $H$, we may assume $P_1=(U_0,W_1,0).$
Choose a lift $\widetilde T_1:W_1\to\F_q^a$ of $T_1$ vanishing on $S_W$. Since $S_W=W_0\cap W_1$, there exists a linear map $S:\F_q^b\to\F_q^a$ such that $S|_{W_0}=0$ and $S|_{W_1}=-\widetilde T_1.$
Define $h_S\in H$ by $h_S(u,w)=(u+S(w),w)$. 
Its induced action on triples is $h_S(U,W,T)=\bigl(U,W,T+\pi_U\circ S|_W\bigr).$
Therefore,
\[
h_S(P_0)
=
\bigl(U_0,W_0,\pi_{U_0}\circ S|_{W_0}\bigr)
=(U_0,W_0,0)=P_0
\]
and 
\[
h_S(P_1)
=
\bigl(U_0,W_1,T_1+\pi_{U_0}\circ S|_{W_1}\bigr)
=
\bigl(U_0,W_1,T_1-\pi_{U_0}\circ\widetilde T_1\bigr)
=(U_0,W_1,0).
\]
Since the relations are $H$-orbitals, this proves the claim.

For $j=0,1$, let $D_j=W\cap W_j$ and define
\[
\Lambda_j
=\pi_{U+U_0}\circ \widetilde T |_{D_j}
:D_j\longrightarrow\F_q^a/(U+U_0).
\]
Since $T_0=T_1=0$, we have $\Theta_{P;P_j}=\Lambda_j$ and $\Theta_{P_j;P}=0,$ and hence $z=\operatorname{rank}\Lambda_0$, 
$z'=\operatorname{rank}\Lambda_1.$
The subspace $S_W$ has codimension one in both $W_0$ and $W_1$. 
Moreover, $D_j\cap S_W=D_0\cap D_1$ and $D_j+S_W\subseteq W_j.$
Hence the dimension formula gives
\[
\dim D_j-\dim(D_0\cap D_1)
=\dim(D_j+S_W)-\dim S_W\leq \dim W_j-\dim S_W=1.
\]
Thus $D_j$ is either $D_0\cap D_1$ or a one-dimensional extension of it.
 Also,
\[
y'-y
=(i-\dim D_1)-(i-\dim D_0)
=\dim D_0-\dim D_1
\in\{-1,0,1\}.
\]
\begin{enumerate}[label={\rm(\Roman*)}]
    \item $y'=y+1$. Hence $D_1=D_0\cap D_1\subseteq D_0$ and $\dim D_0-\dim D_1=1.$
By the definitions of $\Lambda_0$ and $\Lambda_1$, we have $\Lambda_1=\Lambda_0|_{D_1}.$
Therefore, $z'\in\{z-1,z\}$.
\item  $y'=y-1$. 
 Hence $D_0=D_0\cap D_1\subseteq D_1$  and $\dim D_1-\dim D_0=1$.
Since $\Lambda_0=\Lambda_1|_{D_0}$, it follows that $z'\in\{z,z+1\}.$
\item $y'=y$. If $D_0=D_1$, then $\Lambda_0=\Lambda_1$, and hence $z'=z$. Otherwise, both $D_0$ and $D_1$ are one-dimensional extensions of $D_0\cap D_1$. Since $\Lambda_0$ and $\Lambda_1$ agree on $D_0\cap D_1$, their ranks differ by at most one. Consequently, $z'\in\{z-1,z,z+1\}.$
\end{enumerate}
{\bf{Finally, we consider $\epsilon_3=(0,0,1)$.}}
In this case,
\[
 U_1=U_0,
 \quad
 W_1=W_0,
 \quad
 \operatorname{rank}T_1=1,
 \quad
 T_1:W_0\longrightarrow\F_q^a/U_0.
\]
Since $U_1=U_0$ and $W_1=W_0$, we have $ x'=x$ and $y'=y.$ 
Denote $ D=W\cap W_0$ and let
\[\pi:\F_q^a\longrightarrow\F_q^a/(U+U_0), \quad 
\pi_0:\F_q^a/U_0\longrightarrow\F_q^a/(U+U_0)\]
 be two natural quotient maps.
 Define  
\[
 M=\pi\circ\widetilde T|_D:
 D\longrightarrow\F_q^a/(U+U_0), \quad 
 M_1=\pi_0\circ T_1|_D:
 D\longrightarrow\F_q^a/(U+U_0).
\]
Hence, we have $\Theta_{P;P_0}=M$, $\Theta_{P_1;P}=M_1$ and $\Theta_{P;P_1}=M$ so that $z'=\operatorname{rank}(M_1-M)$ and  $z= \operatorname{rank}M$.
Since $\operatorname{rank}T_1=1$, we have
$\operatorname{rank}M_1\leq1$, which gives 
\[
 \left|z'-z\right|
 =
 \left|
 \operatorname{rank}(M_1-M)-\operatorname{rank}M
 \right|
 \leq
 \operatorname{rank}M_1
 \leq1.
\]
Combining the three cases, we show that, for $s=1,2,3$, $\widehat p_{\epsilon_s,\beta}^{\alpha}\neq0$ implies that $\beta\in\mathcal N_s^P(\alpha)\cap\mathcal D_i$.
 Using the identity $\widehat k_\beta^{(i)}\widehat p_{\epsilon_s,\alpha}^{\beta}
=
\widehat k_\alpha^{(i)}\widehat p_{\epsilon_s,\beta}^{\alpha}$, we obtain that   $\widehat p_{\epsilon_s,\alpha}^{\beta}\neq0$ implies that $\beta\in\mathcal N_s^P(\alpha)\cap\mathcal D_i$.  
\end{proof}
We now prove the required positivity result.
\begin{lemma}\label{lem:referral}
For $1\leq i\leq a-1$, $s\in\{1,2,3\}$, we have $\widehat p_{\epsilon_s,\alpha}^{\alpha+\epsilon_s}>0$ whenever
$\alpha,\alpha+\epsilon_s\in\mathcal D_i$.
\end{lemma}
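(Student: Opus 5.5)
The plan is to exhibit, for each $s$, an explicit witness. Since $\widehat p_{\epsilon_s,\alpha}^{\alpha+\epsilon_s}$ counts points $P_1$ with $(P_0,P_1)\in\widehat R_{\epsilon_s}$ and $(P_1,P)\in\widehat R_\alpha$ for any fixed pair $(P_0,P)\in\widehat R_{\alpha+\epsilon_s}$, it suffices to fix one convenient pair $(P_0,P)$ and construct one such $P_1$. Equivalently, by the identity $\widehat k_\beta\widehat p_{\epsilon_s,\alpha}^{\beta}=\widehat k_\alpha\widehat p_{\epsilon_s,\beta}^{\alpha}$ and positivity of valencies, we may instead fix $(P_0,P)\in\widehat R_\alpha$ and produce $P_1$ with $(P_0,P_1)\in \widehat R_{\epsilon_s}$ and $(P_1,P)\in\widehat R_{\alpha+\epsilon_s}$. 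We work throughout with $P_0=(U_0,W_0,0)$ and with the formulas for $x',y',z'$ set up in the proof of \cref{lem:plain}.

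\textbf{Case $s=3$.} Write $\alpha=(x,y,z)$. Take $P=(U,W,T)$ with $\dim(U\cap U_0)=a-i-x$, $\dim D=i-y$ where $D=W\cap W_0$, and $M=\pi\circ\widetilde T|_D$ of rank $z$. The condition $\alpha+\epsilon_3\in\mathcal D_i$ gives $z+1\le\min\{\dim D,\dim \F_q^a/(U+U_0)\}$. Hence we can choose a rank-one map $M_1:D\to\F_q^a/(U+U_0)$ with $\operatorname{rank}(M_1-M)=z+1$: take a vector of $D$ outside $\ker M$, if one exists, or rather pick $d\in\ker M$ and a target vector outside $\operatorname{Im}M$, both available because of the dimension bounds. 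We then extend $M_1$ to a rank-one $T_1:W_0\to\F_q^a/U_0$ lifting through $\pi_0$. This gives $P_1=(U_0,W_0,T_1)$.

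\textbf{Case $s=2$.} We use the normalization $P_1=(U_0,W_1,0)$ from the proof of \cref{lem:plain}. We need $W_1$ with $\dim(W_0\cap W_1)=i-1$ and $D_1=W\cap W_1$ a codimension-one subspace of $D_0$ on which $\Lambda_0$ keeps rank $z$. To arrange this, choose a hyperplane $S_W$ of $W_0$ containing a complement of $\ker\Lambda_0$ but not all of $D_0$; this is possible when $z<\dim D_0$, which follows from $z\le i-\max\{x,y+1\}$. Then choose $W_1\supset S_W$ with $W_1\cap W=S_W\cap W$ and $W_1\neq W_0$, which exists since $b-i\ge i$ leaves room ($y+1\le b-i$).

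\textbf{Case $s=1$.} We take $W_1=W_0$ and $T_1=0$, and choose a hyperplane $S_U$ of $U_0$ together with $U_1\supset S_U$ of dimension $a-i$ so that we are in subcase (I): $U\cap U_0\subseteq S_U$ (so $\delta_0=0$), $U_1\cap U=S_U\cap U$, and $\rho_1$ kills nothing of $\operatorname{Im}M$, so that $z'=z$.

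The main obstacle is this last requirement: choosing $U_1$ generically so that $L_1=(U+U_1)/(U+S_U)$ avoids $\operatorname{Im}(\rho_0\text{-relevant }M)$. This needs a dimension count using $x+1\le\min\{a-i,i\}$ and $z\le i-(x+1)$, to ensure that $\F_q^a/(U+S_U)$ has dimension strictly larger than $z$. We then pick the extra vector of $U_1$ outside the span of $U+S_U$ and preimages of $\operatorname{Im}M$.
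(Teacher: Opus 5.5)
Your reduction through $\widehat k_{\alpha+\epsilon_s}\widehat p^{\alpha+\epsilon_s}_{\epsilon_s,\alpha}=\widehat k_\alpha\widehat p^{\alpha}_{\epsilon_s,\alpha+\epsilon_s}$ is valid. Your cases $s=3$ and $s=2$ work as described. For $s=2$, a vector $t\notin W+S_W$ exists because $\dim(W+S_W)=i+y<b$, which is exactly $y+1\le b-i$. Your aside ``$b-i\ge i$'' is false in general and is not needed.

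The case $s=1$, however, fails as written. You impose $U\cap U_0\subseteq S_U$, i.e.\ $\delta_0=0$, and $U_1\cap U=S_U\cap U$, i.e.\ $\delta_1=0$. Then $x'-x=\delta_0-\delta_1=0$, so you are in subcase (III), not (I), and $(P_1,P)\notin\widehat R_{\alpha+\epsilon_1}$. Subcase (I) requires $L_0=0$, i.e.\ $\delta_0=1$: the hyperplane $S_U$ of $U_0$ must \emph{not} contain $U\cap U_0$. This is possible because $\dim(U\cap U_0)=a-i-x\ge1$ by $x+1\le a-i$, which is where that hypothesis is really used. Then $U+S_U=U+U_0$, so $\mathcal H=\F_q^a/(U+U_0)$ has dimension $i-x>z$. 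Pick $u_1\in\F_q^a$ outside the preimage of $\operatorname{Im}M$, a subspace of dimension $a-i+x+z<a$ (using $z\le i-x-1$). Put $U_1=S_U+\langle u_1\rangle$, $W_1=W_0$, $T_1=0$. Then $U_0\cap U_1=S_U$, and $U\cap U_1=U\cap S_U$ has dimension $a-i-x-1$. Moreover $z'$ is the rank of $M$ followed by the quotient by $\langle\bar u_1\rangle$, which equals $z$ since $\bar u_1\notin\operatorname{Im}M$.

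With that correction your proof is complete, and it genuinely differs from the paper's. The paper works downward: it fixes $(P_0,P)\in\widehat R_{\alpha+\epsilon_s}$ and builds $P_1$ with $(P_1,P)\in\widehat R_\alpha$. The needed vectors ($u\in U\setminus U_0$, $t\in W\setminus W_0$) then come for free. The price is that a nonzero $T_1$ must be constructed to cancel part of $T$: the $\ker\pi$-component for $s=1$, and the value $T(t)$ for $s=2$. Your upward version allows $T_1=0$ for $s=1,2$ and replaces those cancellations by avoiding a proper subspace. In exchange, you must invoke the valency identity and use the inequalities defining $\alpha+\epsilon_s\in\mathcal D_i$ explicitly to guarantee room for the generic choices.
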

\begin{proof}
Given $P_0=(U_0,W_0,0)$ and $P=(U,W,T)$ with
$(P_0,P)\in\widehat R_{\alpha+\epsilon_s}$, 
we need to find a point $P_1$ satisfying
$(P_0,P_1)\in\widehat R_{\epsilon_s}$ and
$(P_1,P)\in\widehat R_\alpha$.
Let $\alpha=(x,y,z)$ and $D=W\cap W_0$.\\ [0.1in]
\noindent\textbf{Case 1: $s=1$.}\\
Choose $u\in U\setminus U_0$ and a hyperplane
$S_U$ of $U_0$ containing $U\cap U_0$, and set
\[
U_1=S_U+\langle u\rangle,
\qquad W_1=W_0.
\]
We can check that $U_0\cap U_1=S_U,$ $\dim(U\cap U_1)=a-i-x$ and $ U+U_1=U+S_U\subseteq U+U_0.$
Let $\pi:\F_q^a/(U+U_1)\longrightarrow
\F_q^a/(U+U_0)$ be the natural map. 
Since $U_0\cap(U+S_U)=S_U,$
we have
\[
\ker\pi=(U+U_0)/(U+U_1)\cong U_0/S_U,
\]
and hence $\ker\pi$ is one-dimensional. Let
$\overline T:D\longrightarrow\F_q^a/(U+U_1)$
be the map induced by $T|_D$. Then $\operatorname{rank}(\pi\circ\overline T)=\operatorname{rank}(\Theta_{P;P_0})=z.$
Choose a decomposition $\F_q^a/(U+U_1)=\ker\pi\oplus C,$
and write $\overline T=\overline T_{\ker\pi}+\overline T_C,$
where $\operatorname{Im}\overline T_{\ker\pi}\subseteq\ker\pi$ and $
\operatorname{Im}\overline T_C\subseteq C.$
Since $\pi|_C:C\to\F_q^a/(U+U_0)$ is an isomorphism, we have $\operatorname{rank}\overline T_C=z.$
The natural map $(U_0+U_1)/U_1\longrightarrow\ker\pi$ is an isomorphism. 
Thus, choose $T_1|_D$ so that its induced map
equals $\overline T_{\ker\pi}$, and extend it to a map
\[
T_1:W_0\longrightarrow (U_0+U_1)/U_1\subseteq\F_q^a/U_1.
\]
Take $P_1=(U_1,W_0,T_1)$. 
Since $\operatorname{Im}T_1\subseteq (U_0+U_1)/U_1,$
we have $(P_0,P_1)\in\widehat R_{\epsilon_1}$. Moreover,
\[
\Theta_{P_1;P}-\Theta_{P;P_1}
=
\overline T_{\ker\pi}-\overline T
=
-\overline T_C.
\]
Hence $\operatorname{rank}
\bigl(\Theta_{P_1;P}-\Theta_{P;P_1}\bigr)
=
\operatorname{rank}\overline T_C
=
z$.
Therefore, we obtain $(P_1,P)\in\widehat R_\alpha.$

\noindent
{\bf{Case 2: $s=2$. }}  \\
Then $\dim D=i-y-1$. Choose $t\in W\setminus W_0$ and a hyperplane $S_W$ of $W_0$ containing $D$, and set $W_1=S_W\oplus\langle t\rangle.$
Choose $v\in\F_q^a$ such that $T(t)=v+U$, and define
\[
T_1:W_1\longrightarrow\F_q^a/U_0, \quad \text{so~that~} T_1|_{S_W}=0,\quad T_1(t)=v+U_0.
\]
Take $P_1=(U_0,W_1,T_1)$. 
Since $W_0\cap W_1=S_W$ and $T_1|_{S_W}=0,$ we have $(P_0,P_1)\in\widehat R_{\epsilon_2}$.
Let $\overline T$ and $\overline T_1$ be the maps induced by $T$ and $T_1$, respectively, in the common quotient
$\F_q^a/(U+U_0)$. 
Since $W\cap W_1=D\oplus\langle t\rangle,$
we have
\[
(\overline T_1-\overline T)|_D
=-\overline T|_D,
\qquad
(\overline T_1-\overline T)(t)=0.
\]
Hence $z(P_1,P)
=\operatorname{rank}(\overline T_1-\overline T)
=\operatorname{rank}(\overline T|_D)
=z.$
Finally, $\dim(W\cap W_1)=\dim D+1=i-y,$ and therefore $(P_1,P)\in\widehat R_\alpha$.\\[0.1in]
\noindent
{\bf{Case 3: $s=3$. }} \\
Let $F:D\longrightarrow\F_q^a/(U+U_0)$ be the map induced by $T|_D$. Since
$(P_0,P)\in\widehat R_{\alpha+\epsilon_3}$, \cref{eqn:return} gives $\operatorname{rank}F=z+1.$
Choose a rank-one map
\[
F_1:D\longrightarrow\F_q^a/(U+U_0), \quad 
\text{such~that~} \operatorname{rank}(F-F_1)=z.
\]
The natural quotient map $\rho_0:\F_q^a/U_0\longrightarrow
\F_q^a/(U+U_0)$ is surjective.
Choose a rank-one map $T_1:W_0\longrightarrow\F_q^a/U_0$ such that $\left.\rho_0\circ T_1\right|_D=F_1$.
Such a map is obtained by lifting $F_1$ to $D$ and extending the lift by zero to $W_0$.
Take $P_1=(U_0,W_0,T_1).$ 
Since $\operatorname{rank}T_1=1$, we have $(P_0,P_1)\in\widehat R_{\epsilon_3}.$
In the common quotient $\F_q^a/(U+U_0)$, the maps induced on $D$ by $T_1$ and $T$ are $F_1$ and $F$, respectively. Thus $z(P_1,P)=\operatorname{rank}(F_1-F)=z.$
Since $P_0$ and $P_1$ have the same first two components, this yields $(P_1,P)\in\widehat R_\alpha.$

The required point $P_1$ exists in each case, so
$\widehat p_{\epsilon_s,\alpha}^{\alpha+\epsilon_s}>0$ for $s=1,2,3$.
\end{proof}
Combining \cref{lem:plain,lem:referral}, we give the proof of the main result of this section.
\begin{proof}[{Proof of \cref{thm:laboratory}}]
We first verify that $\mathcal D_i$ is a down-set w.r.t. $\preceq_{cw}$. 
Let $(x,y,z)\in\mathcal D_i$ and suppose that
$0\leq x'\leq x,$ $0\leq y'\leq y$ and
$0\leq z'\leq z.$
The upper bounds for $x'$ and $y'$ follow immediately from those
for $x$ and $y$. Moreover, $z'
\leq z
\leq i-\max\{x,y\}
\leq i-\max\{x',y'\}.$
Hence $(x',y',z')\in\mathcal D_i$.
Since $1\leq i\leq a-1$ and $a\leq b$, the domain
$\mathcal D_i$ contains $\epsilon_1,\epsilon_2,\epsilon_3$.

By \cref{lem:plain,lem:referral}, the intersection numbers satisfy $\widehat p_{\epsilon_s,\alpha}^{\beta}\neq0$ implying $\beta\leq_{123}\alpha+\epsilon_s,$
and
$\widehat p_{\epsilon_s,\alpha}^{\alpha+\epsilon_s}>0$
whenever $\alpha+\epsilon_s\in\mathcal D_i$.
Therefore, \cref{lem:systematic} shows that the association scheme on
$X_i$ is trivariate $P$-polynomial on $\mathcal D_i$.      
\end{proof}
\section[The trivariate Q-polynomial structure]{The trivariate $Q$-polynomial structure}
\label{sect:profit}
In this section, we prove that each interior shell of the Grassmann scheme is trivariate $Q$-polynomial by the required vanishing and nonvanishing conditions for the Krein parameters.

The proof is outlined as follows.
We first express the spherical functions $\phi_{m,n,r}^{(i)}(v,w,z)$ as products of two $q$-Hahn polynomials $H_m^{(r)}(v)$, $J_n^{(r)}(w)$ and a $q$-Krawtchouk polynomial $L_r(v,w,z)$.
We then use the recurrence relations for these orthogonal polynomials to obtain expansions for $\phi^{(i)}_{\varepsilon_s}(\tau) \phi^{(i)}_{\alpha}(\tau)$.  
By comparing these expansions with the formula for the Krein parameters, we show that $q_{\epsilon_s,\alpha}^{\beta}=0$ unless $\beta\in\mathcal N_s^Q(\alpha)\cap\mathcal D_i^*$.
Finally, we use leading coefficients of these orthogonal polynomials to show that $q_{\epsilon_s,\alpha}^{\alpha+\epsilon_s}\neq 0$ whenever $\alpha, \alpha+\epsilon_s\in\mathcal D_i^*$.

Throughout the remainder of this section, an orthogonal polynomial, a spherical function, or a coefficient with the index outside its admissible set is regarded as zero. 

For $\lambda,\mu,\nu\in\mathcal D_i^*$, the Krein number is given by the second orthogonality relation
\begin{equation}\label{eqn:overview}
 q_{\lambda,\mu}^{\nu}
 =\frac{1}{|X_i|d_\nu}
 \sum_{\tau\in\mathcal I_i}k^{(i)}_\tau
 Q_\lambda(\tau)Q_\mu(\tau)Q_\nu(\tau),
 \qquad
 |X_i|=q^{i^2}{a\brack i}_q{b\brack i}_q.
\end{equation}
For fixed $i$, we omit the superscript $(i)$ from $\phi_\lambda^{(i)}(\tau)$.
Substitution of $Q_\lambda=d_\lambda\phi_\lambda$ in \cref{thm:hallway} into
\cref{eqn:overview} gives, for
$\alpha,\beta\in\mathcal D_i^*$,
\begin{equation}\label{eqn:deviation}
 q_{\epsilon_s,\alpha}^{\beta}
 =\frac{d_{\epsilon_s}d_\alpha}{|X_i|}
 \sum_{(v,w,z)\in\mathcal I_i}
 k^{(i)}_{(v,w,z)}
 \phi_{\epsilon_s}(v,w,z)
 \phi_\alpha(v,w,z)\phi_\beta(v,w,z).
 \end{equation}
 For the Grassmann scheme $\Gr_q(A+B,A)$, the number of vertices, the valency, and the multiplicity are respectively given by 
\[g_{A,B}={A+B\brack A}_q,\qquad
 \kappa_{A,B}(\xi)=q^{\xi^2}{A\brack\xi}_q{B\brack\xi}_q,\qquad 
 \mu_{A,B}(h)={A+B\brack h}_q-{A+B\brack h-1}_q.\]
 Denote $D_r=K_r(0;i,i;q)=\mathsf R_q(i,i;r)$ for $0\leq r\leq i$ and
$c_i(v)=2i-a+v$. 

We first give the recurrence relation of the normalized  $q$-Krawtchouk polynomial $L_r(v,w,z)$ involved in the spherical function $\phi_{m,n,r}(v,w,z)$.
\begin{lemma}\label{lem:worth}
Define a normalization of the $q$-Krawtchouk polynomial
\[L_r(v,w,z)
 :=\begin{cases}
 \dfrac{K_r(z;w,c_i(v);q)}{D_r},&r\leq\min\{w,c_i(v)\},\\
 0,&r>\min\{w,c_i(v)\}.
 \end{cases}\] 
 For $0\leq r\leq i$, the normalized $q$-Krawtchouk 
$L_r(v,w,z)$ on $\mathcal I_i$ satisfy
\begin{equation}\label{eqn:response}
L_1L_r
=
A_r^+L_{r+1}
+
A_r^0(v,w)L_r
+
A_r^-(v,w)L_{r-1},
\end{equation}
where the terms
$L_{-1}$ and $L_{i+1}$ are omitted and, for the remaining terms,
\begin{align}\label{eqn:grip}
A_r^+
&=\frac{D_{r+1}}{D_1D_r}
q^r[r+1]_q,   \nonumber  \\
A_r^0(v,w)
&=
\frac{[r]_q}{D_1}
\left(
q^w+q^{c_i(v)}-q^r-q^{r-1}-1
\right),\\
A_r^-(v,w)
&=
\frac{D_{r-1}}{D_1D_r}
q^{2r-2}(q-1)
[w-r+1]_q[c_i(v)-r+1]_q. \nonumber 
\end{align}
\end{lemma}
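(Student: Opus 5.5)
The plan is to reduce the identity to the classical three-term recurrence for the $q$-Krawtchouk polynomials attached to the bilinear-forms scheme $H_q(A,B)$, with $A=w$ and $B=c_i(v)$. For fixed $(v,w)$ I regard $K_r(z;w,c_i(v);q)$, for $0\leq r\leq\min\{w,c_i(v)\}$, as the (unnormalized) first eigenvalues of $H_q(w,c_i(v))$, in Dunkl's convention $K_r(0;A,B;q)=\mathsf R_q(A,B;r)$. I will first record the recurrence
\[
K_1(z)K_r(z)=a_r K_{r+1}(z)+b_r K_r(z)+c_r K_{r-1}(z),
\]
which comes from $A_1A_r=p_{1,r}^{r+1}A_{r+1}+p_{1,r}^{r}A_r+p_{1,r}^{r-1}A_{r-1}$ in $H_q(A,B)$, read on an eigenspace. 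The coefficients are the intersection numbers of the bilinear-forms scheme:
\[
p_{1,r}^{r+1}=q^r[r+1]_q,\qquad p_{1,r}^{r-1}=\frac{\mathsf R_q(A,B;1)\,\mathsf R_q(A,B;r)}{\mathsf R_q(A,B;r-1)}\cdot\frac{q^{r-1}[r]_q}{\mathsf R_q(A,B;1)}\cdot(\ldots),
\]
and $p_{1,r}^r$ is obtained from $k_1=\sum_\ell p_{1,r}^\ell$. To avoid ambiguity I will instead derive $b_r$ and $c_r$ from the standard BCN formulas ($b_j=q^{2j}(q^{A-j}-1)(q^{B-j}-1)/(q-1)$, $c_j=q^{j-1}[j]_q$). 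Equivalently, I can take Dunkl's explicit $_3\phi_2$ form of $K_r$ and verify the contiguous relation directly. The cleanest route is the adjacency-algebra one, since it is already combinatorial.

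Next I divide by $D_1D_r$ and rewrite in terms of $L_r=K_r/D_r$, with $D_r=\mathsf R_q(i,i;r)$ independent of $(v,w)$. The coefficient of $L_{r+1}$ becomes $p_{1,r}^{r+1}D_{r+1}/(D_1D_r)$, which matches $A_r^+$ and is indeed independent of $(v,w)$. The coefficient of $L_{r-1}$ becomes $p_{1,r}^{r-1}D_{r-1}/(D_1D_r)$, where $p_{1,r}^{r-1}=(k_1/k_r)\,k_{r-1}\,p_{1,r-1}^{r}$ with $k_j=\mathsf R_q(w,c_i(v);j)$. Simplifying this ratio of rank counts should give $q^{2r-2}(q-1)[w-r+1]_q[c_i(v)-r+1]_q$, as in \cref{eqn:grip}. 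The middle coefficient is $p_{1,r}^r/D_1$. For this I will use $p_{1,r}^r=k_1-p_{1,r}^{r+1}-p_{1,r}^{r-1}$ with $A=w$, $B=c_i(v)$, and expand to reach $[r]_q(q^w+q^{c_i(v)}-q^r-q^{r-1}-1)$.

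Then I will handle the boundary. When $r=\min\{w,c_i(v)\}$, we have $L_{r+1}=0$ by convention, and the classical recurrence also has no $K_{r+1}$ term. So the identity holds provided the formula for the middle coefficient is unchanged, which it is: $p_{1,r}^{r+1}$ is multiplied by the vanishing $K_{r+1}$, or one checks that the standard formula still applies at top degree. When $r>\min\{w,c_i(v)\}$, both sides vanish, except possibly for the $A_r^-L_{r-1}$ term at $r=\min+1$. There $A_r^-$ contains the factor $[w-r+1]_q[c_i(v)-r+1]_q$, one of whose factors is $[0]_q=0$. This consistency check is essential for the identity on all of $\mathcal I_i$.

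I expect the main obstacle to be the algebraic simplification of the middle coefficient $A_r^0$. It depends on getting Dunkl's normalization of $K_r$ exactly right, in particular whether $K_r$ is the eigenvalue $P_r(z)$ or a scaled version of it. I would therefore first test the identity at $r=0,1$ and $z=0$. At $z=0$, $L_r=K_r(0)/D_r$, so the identity must reduce to $k_1k_r=\sum_\ell p_{1,r}^\ell k_\ell$. This test will fix any normalization ambiguity before I do the general computation.
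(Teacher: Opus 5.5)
Your overall route is the paper's. You realize $K_r(z;w,c;q)$, with $c=c_i(v)$, as eigenvalues of the bilinear-forms scheme $H_q(w,c)$; the paper cites that $H_q(w,c)$ is $P$-polynomial and self-dual for this. You then read $A_1A_r$ on an eigenspace, divide by $D_1D_r$, and handle $r>\min\{w,c\}$. Your observation about $r=\min\{w,c\}+1$ is also correct: there the term $A_r^-L_{r-1}$ vanishes because one of $[w-r+1]_q$, $[c-r+1]_q$ equals $[0]_q=0$. This is exactly what the paper's ``clear from the definition'' relies on.

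The problem is the bookkeeping you propose for the intersection numbers. Two of the identities you write are false, and following them would not produce the stated $A_r^0$ and $A_r^-$.

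First, $\sum_\ell p_{1,r}^{\ell}=k_1$ is wrong; the sum must run over a lower index, $\sum_\ell p_{1,\ell}^{r}=k_1$. Hence
\[
p_{1,r}^r=k_1-p_{1,r-1}^r-p_{1,r+1}^r=k_1-c_r-b_r,
\]
not $k_1-p_{1,r}^{r+1}-p_{1,r}^{r-1}=k_1-c_{r+1}-b_{r-1}$. Already at $r=0$ your version gives $k_1-1$ instead of $0$.

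Second, $k_{r-1}p_{1,r}^{r-1}=k_r\,p_{1,r-1}^{r}$ gives $p_{1,r}^{r-1}=\frac{k_r}{k_{r-1}}p_{1,r-1}^{r}$, not $\frac{k_1k_{r-1}}{k_r}p_{1,r-1}^{r}$. At $r=1$ your formula gives $1$ instead of $k_1$.

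The fix is to use the BCN values you already quote: $p_{1,r}^{r+1}=c_{r+1}=q^r[r+1]_q$ and $p_{1,r}^{r-1}=b_{r-1}=q^{2r-2}(q-1)[w-r+1]_q[c-r+1]_q$. For the middle coefficient,
\[
p_{1,r}^{r}=k_1-b_r-c_r=\frac{(q^w-1)(q^c-1)-(q^w-q^r)(q^c-q^r)}{q-1}-q^{r-1}[r]_q=[r]_q\bigl(q^w+q^c-q^r-q^{r-1}-1\bigr).
\]
This expression stays valid at the top degree $r=\min\{w,c\}$ because there $b_r=0$, since one of $[w-r]_q$, $[c-r]_q$ vanishes. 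That is the real reason the middle coefficient is unchanged at the top degree; the $L_{r+1}$ term simply drops by convention. With these corrections your argument coincides with the paper's proof.
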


\begin{proof}
Fix coordinates $v$ and $w$ in the relation index $(v,w,z)$ and denote
$c=c_i(v)$.  For $0\leq h,z\leq\min\{w,c\}$, the polynomials
$K_h(z;w,c;q)$ are the generalized $q$-Krawtchouk polynomials
corresponding to the bilinear-forms scheme $H_q(w,c)=
 \bigl(\operatorname{Mat}_{w\times c}(\F_q),
       \{R_z\}_{z=0}^{\min\{w,c\}}\bigr),$
where the relation $R_z$ is defined by
\[
 (M,N)\in R_z\Longleftrightarrow\operatorname{rank}(M-N)=z.
\]
More precisely, its first eigenvalue is $ P_h(z)=K_h(z;w,c;q).$
It is known that the bilinear-forms scheme $H_q(w,c)$ is $P$-polynomial and self-dual (see \cite{BBIT2021}). 
Hence we have the three-term recurrence relation
\begin{equation}\label{eqn:captain}
 K_1(z;w,c;q)K_r(z;w,c;q)
 =\sum_{\sigma=-1}^1 b_{r+\sigma}(v,w)K_{r+\sigma}(z;w,c;q).    
\end{equation}
Denote $[h]_q=(q^h-1)/(q-1)$.  Then three coefficients (see also
\cite[Section~5.1]{BKZZ-2024}) are
\begin{align*}
 b_{r+\sigma}(v,w)=\begin{cases}
 q^{2r-2}(q-1)[w-r+1]_q[c_i(v)-r+1]_q &\text{~if~} \sigma=-1,\\
 [r]_q\bigl(q^w+q^{c_i(v)}-q^r-q^{r-1}-1\bigr) &\text{~if~} \sigma=0,\\
q^r[r+1]_q  &\text{~if~} \sigma=1.    
 \end{cases}
\end{align*}
Since $D_h$ is independent of $v,w$, substituting $K_h=D_hL_h$
into \cref{eqn:captain} and dividing by $D_1D_r$ gives $L_1L_r=A_r^+L_{r+1}
       +A_r^0(v,w)L_r+A_r^-(v,w)L_{r-1}$, for
$r\leq\min\{w,c\}$.
The case for $r>\min\{w,c\}$ is clear from the definition of $L_r(v,w,z).$
Therefore, the identity holds throughout $\mathcal I_i$.
\end{proof}
Next we consider two normalized  $q$-Hahn polynomials $H_m^{(r)}(v)$ and $J_n^{(r)}(w)$ involved in the spherical function $\phi_{m,n,r}(v,w,z)$.
\begin{lemma}[{cf.\ \cite[Lemma~5.1]{BKZZ-2024}}]\label{lem:nomination}
For $Z,p\in\mathbb N$ and $0\leq j,\xi \leq\min\{Z,p\}$, define the
normalized $q$-Hahn polynomial $\mathcal E_j^{Z,p}$ by
\begin{align}
\mathcal E_j^{Z,p}(\xi)
&:=\frac{E_j(Z,p,Z,Z-\xi;q^{-1})}
{E_j(Z,p,Z,Z;q^{-1})}=\frac{E_j(p,Z,p,p-\xi;q^{-1})}
{E_j(p,Z,p,p;q^{-1})},
\label{eqn:compartment}
\end{align}
so that $\mathcal E_j^{Z,p}(0)=1$.
For $p\ge1$, $0\le j,\xi\le\min\{Z,p-1\}$, we have 
\begin{equation}\label{eqn:truth}
 \mathcal E_j^{Z,p}(\xi)
=
\rho_{j,0}^{Z,p}\mathcal E_j^{Z,p-1}(\xi)
+
\rho_{j,1}^{Z,p}\mathcal E_{j-1}^{Z,p-1}(\xi),
\end{equation}
where the second term is omitted when $j=0$, and the coefficients are
\begin{align*}
 \rho_{j,0}^{Z,p}
 &:=\frac{[Z+p]_q}{[p]_q}\frac{[p-j]_q}{[Z+p-2j]_q}
       \frac{\mu_{Z,p-1}(j)}{\mu_{Z,p}(j)},\qquad \qquad \qquad 0\leq j\leq\min\{Z,p-1\}\\
 \rho_{j,1}^{Z,p}
 &:=\frac{[Z+p]_q}{[p]_q}
       \frac{q^{p-j+1}[Z-j+1]_q}{[Z+p-2j+2]_q}
       \frac{\mu_{Z,p-1}(j-1)}{\mu_{Z,p}(j)},
       \qquad 1\leq j\leq\min\{Z,p-1\}.
\end{align*}
\end{lemma}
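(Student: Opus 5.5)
The plan is to write the normalized $q$-Hahn polynomials $\mathcal E_j^{Z,p}$ explicitly as terminating basic hypergeometric series in the distance variable $\xi$. Then \eqref{eqn:truth} reduces to a coefficientwise identity between rational functions of $q^{k}$, where $k$ is the summation index.

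\textbf{Step 1: the hypergeometric form.} Recall that $E_j(Z,p,Z,Z-\xi;q^{-1})$ is, up to a nonzero factor independent of $\xi$, the zonal spherical function of the Grassmann scheme $\Gr_q(Z+p,Z)$ evaluated at distance $\xi$. The equality of the two expressions in \eqref{eqn:compartment} is just the symmetry $\Gr_q(Z+p,Z)\cong\Gr_q(Z+p,p)$ under orthogonal complements. Since $\mathcal E_j^{Z,p}(0)=1$, the standard formula for these spherical functions gives
\[
\mathcal E_j^{Z,p}(\xi)=\sum_{k=0}^{j}
\frac{(q^{-j};q)_k\,(q^{j-Z-p-1};q)_k}{(q^{-Z};q)_k\,(q^{-p};q)_k\,(q;q)_k}\,(q^{-\xi};q)_k\,q^{k},
\]
a ${}_3\phi_2$ series. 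This is the one place where Dunkl's normalization has to be matched carefully with the classical one; I would check it at $j=1$ against the eigenvalue $P_1(1)/\kappa_{Z,p}(1)$ of the Grassmann scheme. Write
\[
f^{p}_j(k)=\frac{(q^{-j};q)_k(q^{j-Z-p-1};q)_k}{(q^{-p};q)_k}
\]
for the $p$-dependent part of the $k$-th coefficient. The remaining factors $(q^{-\xi};q)_k q^k/\bigl((q^{-Z};q)_k(q;q)_k\bigr)$ do not depend on $p$ or $j$. For $p\ge1$ and $0\le \xi\le\min\{Z,p-1\}$, all three polynomials in \eqref{eqn:truth} are expanded in this same basis.

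\textbf{Step 2: reduce to a linear identity.} It therefore suffices to show, for $0\le k\le j$,
\[
f^{p}_j(k)=\rho^{Z,p}_{j,0}\,f^{p-1}_j(k)+\rho^{Z,p}_{j,1}\,f^{p-1}_{j-1}(k),
\]
using $f^{p-1}_{j-1}(j)=0$. Divide by $f^{p-1}_j(k)$. Because $q$-shifted factorials telescope, each ratio collapses to a simple expression:
\[
\frac{f^{p}_j(k)}{f^{p-1}_j(k)}=\frac{(1-q^{j-Z-p-1})(1-q^{-p+1})}{(1-q^{j-Z-p-1+k})(1-q^{-p})}\cdot\frac{1-q^{-p+k}}{1-q^{-p+1+k}}\cdots,
\]
and similarly for $f^{p-1}_{j-1}(k)/f^{p-1}_j(k)=(1-q^{-j+k})(1-q^{j-Z-p-1})/\bigl((1-q^{-j})(1-q^{j-Z-p-1+k})\bigr)$ (up to the obvious shift). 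After clearing the common denominator $1-q^{j-Z-p-1+k}$ (and $1-q^{-p+k}$ where it appears), the required identity becomes a polynomial identity of degree at most one in $u=q^{k}$. Such an identity holds for all $k$ as soon as it holds at two values of $u$.

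\textbf{Step 3: determine and identify the coefficients.} I would take $k=0$, which gives $\rho_{j,0}+\rho_{j,1}=1$ (the normalization at $\xi=0$), and $k=j$, where the $\rho_{j,1}$ term vanishes. The $k=j$ condition gives $\rho_{j,0}=f^p_j(j)/f^{p-1}_j(j)$ directly, and then $\rho_{j,1}=1-\rho_{j,0}$. For $j=0$ this gives $\rho_{0,0}=1$, consistent with the second term being omitted.

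The final and, I expect, most tedious step is to rewrite these two closed forms in the form stated in the lemma. Here one uses
\[
\mu_{Z,p}(j)={Z+p\brack j}_q-{Z+p\brack j-1}_q={Z+p\brack j}_q\frac{q^{j}[Z+p-2j+1]_q}{[Z+p-j+1]_q},
\]
so that the ratios $\mu_{Z,p-1}(j)/\mu_{Z,p}(j)$ and $\mu_{Z,p-1}(j-1)/\mu_{Z,p}(j)$ become explicit products of $q$-integers. Comparing these with the telescoped products from Step 3 should reproduce $\rho^{Z,p}_{j,0}$ and $\rho^{Z,p}_{j,1}$, including the power $q^{p-j+1}$. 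As a consistency check, I would verify $\rho_{j,0}+\rho_{j,1}=1$ directly from the stated formulas.
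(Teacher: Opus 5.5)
Your argument is correct. It takes a different route from the paper only in the sense that the paper gives no proof: it imports the recurrence from \cite[Lemma~5.1]{BKZZ-2024} and justifies only the second equality in \eqref{eqn:compartment}, via $\Gr_q(Z+p,Z)\cong\Gr_q(Z+p,p)$. You instead verify the lemma directly from the ${}_3\phi_2$ form, and the verification goes through. After the reduction, the conditions at $k=0$ and $k=j$ give
\[
\rho_{j,0}^{Z,p}=\frac{[p-j]_q[Z+p-j+1]_q}{[p]_q[Z+p-2j+1]_q},\qquad
\rho_{j,1}^{Z,p}=\frac{q^{p-j}[j]_q[Z-j+1]_q}{[p]_q[Z+p-2j+1]_q}.
\]
Your formula for $\mu_{Z,p}(j)$ yields
\[
\frac{\mu_{Z,p-1}(j)}{\mu_{Z,p}(j)}=\frac{[Z+p-2j]_q[Z+p-j+1]_q}{[Z+p]_q[Z+p-2j+1]_q},\qquad
\frac{\mu_{Z,p-1}(j-1)}{\mu_{Z,p}(j)}=\frac{q^{-1}[j]_q[Z+p-2j+2]_q}{[Z+p]_q[Z+p-2j+1]_q},
\]
and these reproduce the stated coefficients exactly. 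Compared with the citation, your reduced forms make three facts immediately visible: $\rho_{j,0}+\rho_{j,1}=1$, the positivity of $\rho_{j,0}$ for $j\le p-1$ (used in the proof of \cref{lem:monster}), and the boundary values $\rho^{Z,p}_{p,0}=0$, $\rho^{Z,p}_{p,1}=1$ of the subsequent remark. The citation is shorter but leaves the reader to match the conventions of \cite{BKZZ-2024} with Dunkl's.

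Two points need repair.

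\textbf{The displayed ratio.} It is wrong as written. Telescoping gives
\[
\frac{f^{p}_j(k)}{f^{p-1}_j(k)}=\frac{(1-q^{j-Z-p-1})(1-q^{k-p})}{(1-q^{j-Z-p-1+k})(1-q^{-p})},
\]
with no further factors. Here $1-q^{k-p}$ sits only in a numerator, so the only denominator to clear is $1-q^{j-Z-p-1+k}$. It is nonzero for $k\le j$ because $2j\le Z+p-1$. That inequality together with $j\le p-1$ also makes $f^{p-1}_j(k)$ well defined and nonzero, which you need in order to divide. This is precisely why the cleared identity is linear in $u=q^k$. The extra factor $(1-q^{1-p})/(1-q^{k+1-p})$ you wrote would make it quadratic, and the two-point argument would not close.

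\textbf{The identification with the ${}_3\phi_2$.} A check at $j=1$ only guards against a convention mismatch; it does not establish the formula for every $j$. A clean argument runs as follows. Both $\mathcal E_j^{Z,p}$ and your ${}_3\phi_2$ are polynomials of degree $j$ in $q^{-\xi}$, both equal $1$ at $\xi=0$, and both are orthogonal for the weight $\kappa_{Z,p}(\xi)$.
\begin{itemize}
\item For $\mathcal E_j^{Z,p}$, this is the orthogonality of spherical functions used in the proof of \cref{lem:discriminate}.
\item For the ${}_3\phi_2$, it is $q$-Hahn orthogonality with $\alpha q=q^{-Z}$, $\beta=q^{-p-1}$, $N=p$. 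The weight $\frac{(q^{-Z};q)_\xi(q^{-p};q)_\xi}{(q;q)_\xi^2}q^{(Z+p+1)\xi}$ simplifies to $q^{\xi^2}{Z\brack \xi}_q{p\brack \xi}_q$.
\end{itemize}
Uniqueness of orthogonal polynomials then forces the two to coincide. Alternatively, cite Delsarte's description of the Grassmann dual eigenvalues as $q$-Hahn polynomials.
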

\begin{remark}
The second equality in \cref{eqn:compartment}  is due to $\Gr_q(Z+p,Z)\cong\Gr_q(Z+p,p)$. 
In \cref{lem:nomination}, we set $\rho_{0,1}^{Z,p}=0$ for $p\geq1$. 
When $1\leq p\leq Z$, the recurrence relation in \cref{eqn:truth} extends to $j=p$ by
$\mathcal E_p^{Z,p}(\xi)=\mathcal E_{p-1}^{Z,p-1}(\xi)
$ for $0\leq\xi\leq p-1$,
and we set $\rho_{p,0}^{Z,p}=0$ and $\rho_{p,1}^{Z,p}=1$.     
\end{remark}
\begin{lemma}\label{lem:discriminate}
Let $Z,p\in\mathbb N$ and $0\leq j\leq\min\{Z,p\}$.
For $0\leq\xi\leq\min\{Z,p+1\}$, the normalized $q$-Hahn
polynomials satisfy the following identity, where
$\mathcal E_j^{Z,p}(\xi)$ outside its orthogonality grid denotes
evaluation of the same polynomial expression:
\begin{equation}\label{eqn:interference}
[p+1-\xi]_q \mathcal E_j^{Z,p}(\xi)
=
\lambda_{j,0}^{Z,p}\mathcal E_j^{Z,p+1}(\xi)
+
\lambda_{j,1}^{Z,p}\mathcal E_{j+1}^{Z,p+1}(\xi),
\end{equation}
where 
\[
\lambda_{j,0}^{Z,p}
=
[p+1]_q
\frac{[p-j+1]_q}{[Z+p-2j+1]_q},
\qquad
\lambda_{j,1}^{Z,p}
=
[p+1]_q
\frac{q^{p-j+1}[Z-j]_q}{[Z+p-2j+1]_q}.
\]
\end{lemma}
\begin{proof}
The polynomial $\mathcal E_j^{Z,p}(\xi)$ is the normalized $q$-Hahn polynomial
associated with $\Gr_q(Z+p,p)$, with relation variable $\xi$.
So the valencies are $\kappa_{Z,p}(\xi)
=
q^{\xi^2}
{Z\brack \xi}_q
{p\brack \xi}_q.$
Thus
\begin{equation}\label{eqn:grimace}
 [p+1-\xi]_q\kappa_{Z,p+1}(\xi)
=
[p+1]_q\kappa_{Z,p}(\xi),
\end{equation}
by the identity $[p+1-\xi]_q{p+1\brack \xi}_q
=
[p+1]_q{p\brack \xi}_q.$
Define an inner product
\[
\langle F,G\rangle_{Z,p}
:=
\sum_{\xi=0}^{\min\{Z,p\}}
\kappa_{Z,p}(\xi)F(\xi)G(\xi).
\]
The orthogonality relations give $\langle
\mathcal E_j^{Z,p},\mathcal E_k^{Z,p}
\rangle_{Z,p}
=
\frac{g_{Z,p}}{\mu_{Z,p}(j)}\delta_{j,k}.$
Since $[p+1-\xi]_q\mathcal E_j^{Z,p}$ has degree at most $j+1$,
it can be expressed in $\mathcal E_0^{Z,p+1},\ldots,
\mathcal E_{\min\{j+1,Z,p+1\}}^{Z,p+1}$.
For $k<j$, \cref{eqn:grimace,eqn:truth} give
\begin{align*}
\langle
[p+1-\xi]_q\mathcal E_j^{Z,p},\mathcal E_k^{Z,p+1}
\rangle_{Z,p+1}
&=
[p+1]_q
\langle
\mathcal E_j^{Z,p},\mathcal E_k^{Z,p+1}
\rangle_{Z,p}\\
&=
[p+1]_q
\langle
\mathcal E_j^{Z,p},
\rho_{k,0}^{Z,p+1}\mathcal E_k^{Z,p}
+
\rho_{k,1}^{Z,p+1}\mathcal E_{k-1}^{Z,p}
\rangle_{Z,p}
=0.
\end{align*}
Taking the inner product with $\mathcal E_{j+\delta}^{Z,p+1}$
for each $\delta\in\{0,1\}$ for which $j+\delta$ is admissible, and again using
\cref{eqn:grimace}, we obtain
\begin{align*}
\lambda_{j,\delta}^{Z,p}
=
\frac{
\langle
[p+1-\xi]_q\mathcal E_j^{Z,p},
\mathcal E_{j+\delta}^{Z,p+1}
\rangle_{Z,p+1}
}{
\|\mathcal E_{j+\delta}^{Z,p+1}\|_{Z,p+1}^{2}
}
&=
[p+1]_q
\rho_{j+\delta,\delta}^{Z,p+1}
\frac{\|\mathcal E_j^{Z,p}\|_{Z,p}^{2}}
     {\|\mathcal E_{j+\delta}^{Z,p+1}\|_{Z,p+1}^{2}}\\
&=
\frac{[p+1]_q^2}{[Z+p+1]_q}
\frac{\mu_{Z,p+1}(j+\delta)}{\mu_{Z,p}(j)}
\rho_{j+\delta,\delta}^{Z,p+1},
\end{align*}
where the last equality follows from $\|\mathcal E_j^{Z,p}\|_{Z,p}^{2}
=
\frac{g_{Z,p}}{\mu_{Z,p}(j)}$ and $\frac{g_{Z,p}}{g_{Z,p+1}}
=
\frac{[p+1]_q}{[Z+p+1]_q}.$
Substituting the coefficients $\rho_{j,0}^{Z,p}$ and
$\rho_{j,1}^{Z,p}$ from \cref{lem:nomination} completes the proof.
\end{proof}
Put $A=a-i$,  $B=b-i$ and $p=i-r$.
Define two normalizations of $q$-Hahn polynomials
\[H_m^{(r)}(v):=\mathcal E_m^{A,p}(A-v), \quad J_n^{(r)}(w):=\mathcal E_n^{B,p}(i-w).\]
For $r<i$, \cref{lem:nomination} gives
\begin{align}\label{eqn:bland}
 H_m^{(r)}
 =\rho_{m,0}^{A,p}H_m^{(r+1)}
   +\rho_{m,1}^{A,p}H_{m-1}^{(r+1)}, \qquad
 J_n^{(r)}
 =\rho_{n,0}^{B,p}J_n^{(r+1)}
   +\rho_{n,1}^{B,p}J_{n-1}^{(r+1)}.
\end{align}
For $r>0$, 
\cref{lem:discriminate} gives
\begin{align}
[c_i(v)-r+1]_qH_m^{(r)}
&=\lambda_{m,0}^{A,p}H_m^{(r-1)}
  +\lambda_{m,1}^{A,p}H_{m+1}^{(r-1)},
  \label{eqn:draft}\\
[w-r+1]_qJ_n^{(r)}
&=\lambda_{n,0}^{B,p}J_n^{(r-1)}
  +\lambda_{n,1}^{B,p}J_{n+1}^{(r-1)}.
  \label{eqn:worker}
\end{align}
For $\sigma\in\{-1,0,1\}$ and $p=i-r$, define
\begin{align*}
\mathsf h_{m,\sigma}^{(r)}
&=\frac{\mu_{A,p}(m+\sigma)}{g_{A,p}}
 \sum_{\xi=0}^{\min\{A,p\}}\kappa_{A,p}(\xi)
 H_1^{(0)}(A-\xi)H_m^{(r)}(A-\xi)
 H_{m+\sigma}^{(r)}(A-\xi),\\
\mathsf j_{n,\sigma}^{(r)}
&=\frac{\mu_{p,B}(n+\sigma)}{g_{p,B}}
 \sum_{\eta=0}^{\min\{p,B\}}\kappa_{p,B}(\eta)
 J_1^{(0)}(i-\eta)J_n^{(r)}(i-\eta)
 J_{n+\sigma}^{(r)}(i-\eta).
\end{align*}
Orthogonality of $q$-Hahn polynomials gives the three-term expansions
\begin{equation}\label{eqn:braid}
H_1^{(0)}H_m^{(r)}
=\sum_{\sigma=-1}^{1}\mathsf h_{m,\sigma}^{(r)}H_{m+\sigma}^{(r)},
\qquad
J_1^{(0)}J_n^{(r)}
=\sum_{\sigma=-1}^{1}\mathsf j_{n,\sigma}^{(r)}J_{n+\sigma}^{(r)}.
\end{equation}

\begin{lemma}\label{lem:wheat}
Let $(m,n,r)\in\mathcal D_i^*$ and 
$\Omega_t=\{(v,w,z)\in\mathcal I_i:t\leq\min\{w,c_i(v)\}\}$
for $0\leq t\leq i$.
 The coefficients in
\cref{lem:worth}, after multiplication by $H_m^{(r)}J_n^{(r)}$,
have the following expansions 
\begin{align*}
A_r^+H_m^{(r)}J_n^{(r)}
&=\sum_{\delta,\eta\in\{0,1\}}
\mathsf u_{\delta,\eta}(m,n,r)
H_{m-\delta}^{(r+1)}J_{n-\eta}^{(r+1)}
\qquad\text{on}\quad \Omega_{r+1}, \\
A_r^0(v,w) H_m^{(r)}J_n^{(r)}
&=\sum_{(s,t)\in\mathcal M(m,n)}
\mathsf v_{s,t}(m,n,r)
H_s^{(r)}J_t^{(r)}
\qquad \ \text{on} \quad\Omega_r,\\
A_r^-(v,w) H_m^{(r)}J_n^{(r)}
&=\sum_{\delta,\eta\in\{0,1\}}
\mathsf w_{\delta,\eta}(m,n,r)
H_{m+\delta}^{(r-1)}J_{n+\eta}^{(r-1)}
\qquad\text{on} \quad\Omega_{r-1},
\end{align*}
where $\mathcal M(m,n)
=
\{(m,n),(m\pm 1,n),(m,n\pm 1)\}$ and the coefficients are given in the proof.
\end{lemma}
\begin{proof}

First assume $r<i$ and put $p=i-r$.  The normalized $q$-Hahn
polynomials $H_m^{(r)}$ and $J_n^{(r)}$ correspond to $\Gr_q(A+p,A)$ and $\Gr_q(B+p,p)$,
respectively.  
The isomorphism $\Gr_q(A+p,A)\cong\Gr_q(A+p,p)$ gives the form used in \cref{lem:nomination}. 
 Since $A_r^+$ is independent of $v,w$, the relations \cref{eqn:bland} give the expression on $\Omega_{r+1}$
\[\mathsf u_{\delta,\eta}(m,n,r)=A_r^+\rho_{m,\delta}^{A,i-r}\rho_{n,\eta}^{B,i-r}.\] 
Note that the normalized $q$-Hahn polynomials $H_1^{(0)}(v)$ and $J_1^{(0)}(w)$ are polynomials of
degree one in $q^v$ and $q^w$, respectively. 
More precisely, 
\begin{align*}
H_1^{(0)}(v)
=
\frac{[A+i]_q[v]_q-[A]_q^2}{q^A[A]_q[i]_q},    \qquad 
J_1^{(0)}(w)
=
\frac{[B+i]_q[w]_q-[i]_q^2}{q^i[i]_q[B]_q}.
\end{align*}
We can obtain the expressions of $q^v$ and $q^w$ in terms of $H_1^{(0)}(v)$ and $J_1^{(0)}(w)$, respectively
\begin{align*}
q^v
&=
1+(q-1)\frac{[A]_q^2+q^A[A]_q[i]_q H_1^{(0)}(v)}{[A+i]_q}:=\gamma_H+(q^A-\gamma_H)H_1^{(0)}(v),\\
q^w&=1+(q-1)
\frac{[i]_q^2+q^i[i]_q[B]_q J_1^{(0)}(w)}{[B+i]_q}:=
 \gamma_J+(q^i-\gamma_J)J_1^{(0)}(w).         
 \end{align*}
Since $c_i(v)=2i-a+v$, it follows from \cref{eqn:grip} that
\begin{equation}\label{eqn:creep}
A_r^0(v,w)
 =
 \frac{[r]_q}{D_1} \left(C_r+B_r^H H_1^{(0)}(v)+B_r^J J_1^{(0)}(w)\right),    
\end{equation}
 where $C_r=
 \gamma_J+q^{2i-a}\gamma_H-q^r-q^{r-1}-1$, $B_r^H=
 q^{2i-a}(q^A-\gamma_H)$ and $B_r^J=q^i-\gamma_J.$
Applying \cref{eqn:braid} to
\cref{eqn:creep} on $\Omega_r$ gives 
\[\mathsf v_{s,t}(m,n,r)
 =\frac{[r]_q}{D_1}\bigl(
  C_r\delta_{s,m}\delta_{t,n}
  +B_r^H\mathsf h_{m,s-m}^{(r)}\delta_{t,n}
  +B_r^J\mathsf j_{n,t-n}^{(r)}\delta_{s,m}\bigr).\]
For $r>0$, put $p=i-r$ and let
$\kappa_r:=\frac{D_{r-1}}{D_1D_r}\,q^{2r-2}(q-1)$.
 Again using \cref{eqn:grip}, we have 
\[
A_r^-(v,w)
=\kappa_r[c_i(v)-r+1]_q[w-r+1]_q.
\]
On $\Omega_{r-1}$, \cref{lem:discriminate} gives
\[
\begin{aligned}
A_r^-(v,w)H_m^{(r)}J_n^{(r)}
=\kappa_r
\left(\sum_{\delta=0}^1
\lambda_{m,\delta}^{A,p}H_{m+\delta}^{(r-1)}\right)
\left(\sum_{\eta=0}^1
\lambda_{n,\eta}^{B,p}J_{n+\eta}^{(r-1)}\right)=\sum_{\delta,\eta\in\{0,1\}}
\kappa_r\lambda_{m,\delta}^{A,i-r}
\lambda_{n,\eta}^{B,i-r}
H_{m+\delta}^{(r-1)}J_{n+\eta}^{(r-1)}.
\end{aligned}
\]
Therefore, we have
\[\mathsf w_{\delta,\eta}(m,n,r) =\frac{D_{r-1}}{D_1D_r}q^{2r-2}(q-1)  \lambda_{m,\delta}^{A,i-r}\lambda_{n,\eta}^{B,i-r}.\]
This completes the proof.
\end{proof}
Now we obtain the trivariate analogue of the usual
tridiagonal condition for the Krein matrix of a $Q$-polynomial association scheme.

\begin{lemma}
\label{lem:monster}
For $1\leq i\leq a-1$, fix $\alpha=(m,n,r)\in\mathcal D_i^*$. 
Let  $\mathcal N_s^Q(\alpha)=
\{\alpha,\alpha\pm\epsilon_s\}$ for $s=1,2$, and 
\[\begin{aligned}
\mathcal N_3^Q(\alpha)={}&\{\alpha,\alpha\pm\epsilon_1,\alpha\pm\epsilon_2\} 
\cup\{\alpha-\delta\epsilon_1-\eta\epsilon_2+\epsilon_3, \alpha+\delta\epsilon_1+\eta\epsilon_2-\epsilon_3 \mid
             \delta,\eta\in\{0,1\}\}.
\end{aligned}\]
Then, for $s=1,2,3$,  the Krein parameter $q_{\epsilon_s,\alpha}^{\beta}\neq0$ implies $\beta \in \mathcal N_s^Q(\alpha) \cap \mathcal D_i^*$.
Consequently, $q_{\epsilon_s,\alpha}^{\beta}\neq0$ implies that 
 $\beta\le_{312}\alpha+\epsilon_s.$
 Moreover, $q_{\epsilon_s,\alpha}^{\alpha+\epsilon_s}\neq 0$ whenever $\alpha, \alpha+\epsilon_s\in\mathcal D_i^*$.

\end{lemma}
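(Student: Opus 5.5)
The plan is to compute the product $\phi_{\epsilon_s}\phi_\alpha$ pointwise on $\mathcal I_i$ and expand it as a finite linear combination of spherical functions $\phi_\beta$ with $\beta\in\mathcal N_s^Q(\alpha)$. The basis for this is the formula \cref{eqn:deviation}. By the first orthogonality relation $\sum_\tau k_\tau\phi_\beta\phi_{\beta'}=\delta_{\beta\beta'}|X_i|/d_\beta$, any expansion $\phi_{\epsilon_s}\phi_\alpha=\sum_\beta c_\beta\phi_\beta$ gives $q^\beta_{\epsilon_s,\alpha}=d_{\epsilon_s}d_\alpha c_\beta/d_\beta$. So vanishing outside $\mathcal N_s^Q(\alpha)$ and nonvanishing at $\alpha+\epsilon_s$ both reduce to statements about the coefficients $c_\beta$.

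\textbf{Factorization.} First I write $\phi_{m,n,r}(v,w,z)=H_m^{(r)}(v)J_n^{(r)}(w)L_r(v,w,z)$, with the convention that it vanishes off $\Omega_r$. This amounts to checking that the normalizations in \cref{eqn:calculation} and \cref{eqn:compartment} agree up to the constant $D_r$; the vanishing convention matches because $L_r=0$ off $\Omega_r$.

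\textbf{Cases $s=1,2$.} Here $\phi_{\epsilon_1}=H_1^{(0)}$ and $\phi_{\epsilon_2}=J_1^{(0)}$. I need $H_1^{(0)}H_m^{(r)}$ expanded in $H_{m+\sigma}^{(r)}$. The expansion \cref{eqn:braid} uses $H_1^{(0)}$ rather than $H_1^{(r)}$, but $H_1^{(0)}$ is affine in $q^v$, and hence affine in $q^{A-\xi}$. Multiplying a degree-$m$ orthogonal polynomial by a degree-one function only produces indices $m-1$, $m$ and $m+1$, by orthogonality. This gives $\phi_{\epsilon_1}\phi_\alpha=\sum_\sigma \mathsf h^{(r)}_{m,\sigma}\phi_{\alpha+\sigma\epsilon_1}$, so $\beta\in\{\alpha,\alpha\pm\epsilon_1\}$, and the case $s=2$ is symmetric.

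\textbf{Case $s=3$.} Here $\phi_{\epsilon_3}=L_1$, because the Hahn factors $H_0,J_0$ are $1$. Applying \cref{lem:worth} and then \cref{lem:wheat} gives three groups of terms:
\begin{itemize}
\item $A_r^+H_mJ_nL_{r+1}$, which yields terms $\alpha-\delta\epsilon_1-\eta\epsilon_2+\epsilon_3$;
\item $A_r^0H_mJ_nL_r$, which yields terms $\alpha$, $\alpha\pm\epsilon_1$ and $\alpha\pm\epsilon_2$;
\item $A_r^-H_mJ_nL_{r-1}$, which yields terms $\alpha+\delta\epsilon_1+\eta\epsilon_2-\epsilon_3$.
\end{itemize}
Each equality in \cref{lem:wheat} only holds on the corresponding $\Omega_t$. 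This suffices because $L_t$ vanishes off $\Omega_t$, so the products are valid identities on all of $\mathcal I_i$. Terms with indices outside $\mathcal D_i^*$ must then be checked to have zero coefficient or to vanish identically. This is where the remark after \cref{lem:nomination} enters, with $\rho_{p,0}=0$ and the boundary cases of $\lambda$, since $\lambda^{A,p}_{m,1}$ contains the factor $[A-m]_q$.

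\textbf{Order condition.} Under $\le_{312}$, every element of $\mathcal N_s^Q(\alpha)$ has total degree at most $|\alpha|+1$. Those of degree exactly $|\alpha|+1$ are $\alpha+\epsilon_s$ and, for $s=3$, also $\alpha+\epsilon_1$ and $\alpha+\epsilon_2$. All of these have third coordinate at most that of $\alpha+\epsilon_3$, with equality only at $\alpha+\epsilon_3$ itself. Hence all of them are $\le_{312}\alpha+\epsilon_s$.

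\textbf{Nonvanishing.} The coefficient of $\alpha+\epsilon_1$ is $\mathsf h^{(r)}_{m,1}$. This is nonzero because it equals a ratio of leading coefficients: $H_1^{(0)}$ has nonzero leading coefficient in $q^v$, and the $q$-Hahn leading coefficients are nonzero. The same argument handles $s=2$. For $s=3$ the coefficient of $\alpha+\epsilon_3$ is $\mathsf u_{0,0}=A_r^+\rho^{A,p}_{m,0}\rho^{B,p}_{n,0}$. The factor $A_r^+$ is positive, and $\rho_{j,0}>0$ for $j\le p-1$, which holds since $m,n\le i-r-1$ when $\alpha+\epsilon_3\in\mathcal D_i^*$.

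The main obstacle is making sure the $s=3$ expansion is genuinely a combination of the $\phi_\beta$ on the whole of $\mathcal I_i$, including near the boundaries where $w$ or $c_i(v)$ equals $r\pm1$. I would first verify this support bookkeeping carefully, then confirm that no out-of-domain index survives.
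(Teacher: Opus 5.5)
Your proposal is correct and follows essentially the same route as the paper: you factor $\phi_{m,n,r}=H_m^{(r)}J_n^{(r)}L_r$, use the three-term expansions \eqref{eqn:braid} for $s=1,2$ and \cref{lem:worth} combined with \cref{lem:wheat} for $s=3$, read off $q_{\epsilon_s,\alpha}^{\beta}$ by orthogonality, and get nonvanishing from leading coefficients and from $\mathsf u_{0,0}(m,n,r)=A_r^+\rho_{m,0}^{A,i-r}\rho_{n,0}^{B,i-r}>0$. There is one small slip in your order check: for $s=3$ the element $\alpha+\epsilon_1+\epsilon_2-\epsilon_3$ (the case $\delta=\eta=1$) also has total degree $|\alpha|+1$, but its third coordinate $r-1$ is below $r+1$, so $\beta\le_{312}\alpha+\epsilon_3$ still holds.
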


\begin{proof}
Put $A=a-i$ and $B=b-i$.
Then \cref{eqn:calculation} gives
\begin{equation}\label{eqn:related}
 \phi_{m,n,r}(v,w,z)
 =H_m^{(r)}(v)J_n^{(r)}(w)L_r(v,w,z).
\end{equation}
When $r>\min\{w,c_i(v)\}$, the right-hand side equals zero.
 For $s=1,2,3$, $\alpha,\beta\in\mathcal D_i^*$, and
 $\tau=(v,w,z)\in\mathcal I_i$, the definition of the Krein
 parameters and \cref{eqn:finish} give
 \begin{equation}\label{eqn:island}
 \phi_{\epsilon_s}(\tau) \phi_\alpha(\tau)=\frac{1}{d_{\epsilon_s} d_\alpha} \sum_{\beta\in \mathcal D_i^\ast} q_{\epsilon_s,\alpha}^{\beta} d_\beta \phi_\beta(\tau).
 \end{equation}
 Since $\phi_{\epsilon_1}(\tau)=H_1^{(0)}(v)$ and
 $\phi_{\epsilon_2}(\tau)=J_1^{(0)}(w)$,
 \cref{eqn:braid} gives
 \begin{align} 
&\phi_{\epsilon_1}(\tau) \phi_\alpha(\tau)=H_1^{(0)}(v) H_m^{(r)}(v)J_n^{(r)}(w)L_r(v,w,z) 
 =  \sum_{\sigma=-1}^{1}\mathsf h_{m,\sigma}^{(r)} \phi_{m+\sigma,n,r}(\tau), \label{eqn:branch}\\ 
& \phi_{\epsilon_2}(\tau) \phi_\alpha(\tau)=\sum_{\sigma=-1}^{1}\mathsf j_{n,\sigma}^{(r)}
 J_{n+\sigma}^{(r)} H_m^{(r) } L_r(v,w,z)  =  \sum_{\sigma=-1}^{1}\mathsf j_{n,\sigma}^{(r)} \phi_{m,n+\sigma,r}(\tau).\label{eqn:marine}
 \end{align}
 Since $\phi_{\epsilon_3}(\tau)=L_1(v,w,z)$,
 multiplying the three expansions of \cref{lem:wheat} by $L_{r+1}$, $L_r$, and $L_{r-1}$, respectively, and using \cref{lem:worth}, we obtain 
 \begin{align}\label{eqn:linen}
 \phi_{\epsilon_3}^{(i)}\phi_{m,n,r}^{(i)}
 &=
 \sum_{\delta,\eta\in\{0,1\}}
 \mathsf u_{\delta,\eta}(m,n,r)
 \phi_{m-\delta,n-\eta,r+1}^{(i)}
 +\sum_{(s,t)\in\mathcal M(m,n)}
 \mathsf v_{s,t}(m,n,r)\phi_{s,t,r}^{(i)}\notag\\
 &\qquad+\sum_{\delta,\eta\in\{0,1\}}
 \mathsf w_{\delta,\eta}(m,n,r)
 \phi_{m+\delta,n+\eta,r-1}^{(i)}.            
\end{align}
Since $L_t$
vanishes outside $\Omega_t$, this identity holds on all of
$\mathcal I_i$.
 Comparing
\cref{eqn:island} with
\cref{eqn:branch,eqn:marine,eqn:linen} yields, for
$\sigma\in\{-1,0,1\}$ and whenever the index belongs to $\mathcal D_i^*$,
\begin{align}
 q_{\epsilon_1,(m,n,r)}^{(m+\sigma,n,r)}
 =\frac{d_{\epsilon_1}d_{m,n,r}} 
 {d_{m+\sigma,n,r}}\,
 \mathsf h_{m,\sigma}^{(r)},                        
 \qquad 
 q_{\epsilon_2,(m,n,r)}^{(m,n+\sigma,r)}
 =\frac{d_{\epsilon_2}d_{m,n,r}}
 {d_{m,n+\sigma,r}}\,
 \mathsf j_{n,\sigma}^{(r)}.   \label{eqn:camera}
\end{align}
\begin{equation}\label{eqn:salesperson}
 q_{\epsilon_3,\alpha}^{\beta}
 =\frac{d_{\epsilon_3}d_\alpha}{d_\beta}
 \begin{cases}
  \mathsf u_{\delta,\eta}(m,n,r),
   &\beta=(m-\delta,n-\eta,r+1),\\
  \mathsf v_{s,t}(m,n,r),
   &\beta=(s,t,r),\ (s,t)\in\mathcal M(m,n),\\
  \mathsf w_{\delta,\eta}(m,n,r),
   &\beta=(m+\delta,n+\eta,r-1),\\
  0,&\text{otherwise}.
 \end{cases}
\end{equation}
Thus 
$q_{\epsilon_s,\alpha}^{\beta}\ne0$ implies
$\beta\in\mathcal N_s^Q(\alpha)\cap\mathcal D_i^*$ for all
$s=1,2,3$.  
Moreover, $\beta\in\mathcal N_s^Q(\alpha)\cap\mathcal D_i^*$ gives $\beta\le_{312}\alpha+\epsilon_s$.

Finally, suppose $(m+1,n,r)\in\mathcal D_i^*$.  Comparing the
leading coefficients of both sides in the first expansion in \cref{eqn:braid}
gives $\mathsf h_{m,1}^{(r)}\neq 0$.
The same argument applied to the second
expansion in \cref{eqn:braid} gives $\mathsf j_{n,1}^{(r)}\neq 0$
whenever $(m,n+1,r)\in\mathcal D_i^*$.  
If $(m,n,r+1)\in\mathcal D_i^*$, then $m\leq\min\{A,i-r-1\}$ and $n\leq\min\{B,i-r-1\}.$
Consequently the coefficients $\rho_{m,0}^{A,i-r}$ and
$\rho_{n,0}^{B,i-r}$ are strictly positive.  Since $A_r^+>0$, the formula in \cref{lem:wheat}
gives 
\[
 \mathsf u_{0,0}(m,n,r)
 =A_r^+\rho_{m,0}^{A,i-r}\rho_{n,0}^{B,i-r}\neq 0.
\]
Since all multiplicities $d_\gamma$ are positive, \cref{eqn:camera,eqn:salesperson} imply $q_{\epsilon_s,\alpha}^{\alpha+\epsilon_s}\neq0$ whenever
$\alpha, \alpha+\epsilon_s\in\mathcal D_i^*$.
\end{proof}

\begin{theorem}\label{thm:principle}
For $1\leq i\leq a-1$, the association scheme induced on the Grassmann
shell $X_i$ is trivariate  $Q$-polynomial on $\mathcal D_i^*$ with respect to $\le_{312}$.
\end{theorem}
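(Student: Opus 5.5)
The plan is to apply \cref{prop:substitute} with $\ell=3$, the domain $\mathcal D_i^*$ and the graded lexicographic order $\le_{312}$. Most of the work is already in \cref{lem:monster}. What remains is to check the hypotheses on the index set and to confirm that the neighbourhood conditions really give the order relation $\le_{312}$.

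\textbf{Step 1: $\mathcal D_i^*$ is an admissible domain.} I first show that $\mathcal D_i^*$ is a down-set with respect to $\preceq_{cw}$. Take $(m,n,r)\in\mathcal D_i^*$ and $(m',n',r')\preceq_{cw}(m,n,r)$. Then $r'\le r\le i$. Also
\[
m'\le m\le\min\{a-i,i-r\}\le\min\{a-i,i-r'\},
\]
and in the same way $n'\le\min\{i-r',b-i\}$. So $(m',n',r')\in\mathcal D_i^*$.

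Next I check that $\epsilon_1,\epsilon_2,\epsilon_3\in\mathcal D_i^*$. Here I use $1\le i\le a-1$ and $a\le b$, which give $a-i\ge1$, $b-i\ge1$ and $i-1\ge0$. Then $(1,0,0)$ satisfies $m=1\le\min\{a-i,i\}$. Similarly $(0,1,0)$ satisfies $n=1\le\min\{i,b-i\}$, and $(0,0,1)$ satisfies $r=1\le i$.

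\textbf{Step 2: the vanishing condition.} By \cref{lem:monster}, for $s=1,2,3$, $q^\beta_{\epsilon_s,\alpha}\ne0$ forces $\beta\in\mathcal N_s^Q(\alpha)\cap\mathcal D_i^*$. I need every such $\beta$ to satisfy $\beta\le_{312}\alpha+\epsilon_s$. This is the one step I would actually verify rather than quote, since it is where the choice of order matters.

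For $s=1,2$, every $\beta\in\mathcal N_s^Q(\alpha)$ has total degree at most $|\alpha|+1$. Equality holds only for $\beta=\alpha+\epsilon_s$ itself.

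For $s=3$, the elements of degree $|\alpha|+1$ are $\alpha+\epsilon_1$, $\alpha+\epsilon_2$ and $\alpha+\epsilon_3$. Under $\le_{312}$ one compares the third coordinate $r$ first. Both $\alpha+\epsilon_1$ and $\alpha+\epsilon_2$ have third coordinate $r<r+1$, so they lie below $\alpha+\epsilon_3$. The elements $\alpha-\delta\epsilon_1-\eta\epsilon_2+\epsilon_3$ with $(\delta,\eta)\ne(0,0)$ have degree at most $|\alpha|$. The elements $\alpha+\delta\epsilon_1+\eta\epsilon_2-\epsilon_3$ have degree at most $|\alpha|+1$, and in the extreme case $\delta=\eta=1$ their third coordinate is $r-1<r+1$. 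All remaining elements have smaller degree. Hence every such $\beta$ satisfies $\beta\le_{312}\alpha+\epsilon_3$. This explains why the order must rank the third coordinate first: for example, $\le_{123}$ would put $\alpha+\epsilon_1+\epsilon_2-\epsilon_3$ above $\alpha+\epsilon_3$.

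\textbf{Step 3: non-vanishing and conclusion.} The non-vanishing condition $q^{\alpha+\epsilon_s}_{\epsilon_s,\alpha}\ne0$, whenever $\alpha,\alpha+\epsilon_s\in\mathcal D_i^*$, is the last assertion of \cref{lem:monster}. Statement (ii) of \cref{prop:substitute} therefore holds, and the scheme on $X_i$ is trivariate $Q$-polynomial on $\mathcal D_i^*$ with respect to $\le_{312}$.

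The main obstacle is already absorbed in \cref{lem:monster}, namely the support of the Krein parameters. Taking that lemma as given, the only delicate point is the order comparison in Step 2 for $s=3$.
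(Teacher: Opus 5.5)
Your proposal is correct and follows the paper's proof. As there, you check that $\mathcal D_i^*$ is a down-set containing $\epsilon_1,\epsilon_2,\epsilon_3$, then combine the support and nonvanishing statements of \cref{lem:monster} with \cref{prop:substitute}. Your explicit verification of $\beta\le_{312}\alpha+\epsilon_3$ spells out the ``consequently'' in \cref{lem:monster}. One small slip: $\alpha+\epsilon_1+\epsilon_2-\epsilon_3$ also has degree $|\alpha|+1$ and belongs in your initial list of top-degree elements, though you handle it correctly a sentence later.
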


\begin{proof}
We first verify that $\mathcal D_i^*$ is a down-set w.r.t. $\preceq_{cw}$. 
Let $(m,n,r)\in\mathcal D_i^*$
and suppose that
\[
0\leq m'\leq m,\qquad
0\leq n'\leq n,\qquad
0\leq r'\leq r.
\]
Since $r'\leq r$, we have $i-r\leq i-r'$. Consequently,
\[
m'
\leq m
\leq\min\{a-i,i-r\}
\leq\min\{a-i,i-r'\},\quad 
n'
\leq n
\leq\min\{i-r,b-i\}
\leq\min\{i-r',b-i\}.
\]
Thus $(m',n',r')\in\mathcal D_i^*$, so $\mathcal D_i^*$ is a
down-set w.r.t. $\preceq_{cw}$, as required in \cref{def:expertise}.  The assumptions
$1\leq i\leq a-1$ and
$a\leq b$ also imply that
$\epsilon_1,\epsilon_2,\epsilon_3\in\mathcal D_i^*$.
By \cref{lem:monster}, the Krein parameters satisfy
\[
q_{\epsilon_s,\alpha}^{\beta}\neq0
\quad\Longrightarrow\quad
\beta\leq_{312}\alpha+\epsilon_s,
\]
and $q_{\epsilon_s,\alpha}^{\alpha+\epsilon_s}\neq 0$ 
whenever $\alpha+\epsilon_s\in\mathcal D_i^*$.
It follows from \cref{prop:substitute} that the association scheme on
$X_i$ is trivariate $Q$-polynomial on $\mathcal D_i^*$ with
respect to $\leq_{312}$.
\end{proof}
\begin{remark}
Although the intersection numbers can be obtained from the spherical functions in principle, the resulting formulas involve sums of products of spherical functions and do not directly give their support and nonvanishing properties. 
The combinatorial rank arguments and constructions used for the $P$-polynomial structure can provide these
properties directly.
\end{remark}

\section[Polyhedral Leonard pairs]{Polyhedral $A_M$-Leonard pairs}\label{sect:commemorate}
The root-system viewpoint for multivariate $P$- and $Q$-polynomial
association schemes was emphasized in \cite[Section~5(ii)]{BKZZ-2025},
following the work of Iliev and Terwilliger \cite{IT-2012}. In
\cite[Section~7]{BKZZ-2024}, $A_M$-Leonard pairs are obtained from
multivariate $P$- and $Q$-polynomial association schemes whose index domain is the standard simplex.
In this section, we extend this construction to polyhedral index domains and show that the association scheme induced on an interior Grassmann shell provides a polyhedral $A_3$-Leonard pair. 

Fix a positive integer $M$ and a bounded polyhedron
$\mathcal P\subseteq\mathbb R_{\geq 0}^M$.  
Let $\mathcal D=\mathcal P\cap\mathbb N^M,$ and assume that $\mathcal D$ is a down-set with respect to $\preceq_{cw}$.  
Choose linearly independent vectors $\alpha_1,\ldots,\alpha_M\in\mathbb Z^M$, and take
\begin{equation}\label{eqn:conflict}
 \Phi=\{\pm(\alpha_j+\alpha_{j+1}+\cdots+\alpha_k):
1\leq j\leq k\leq M\}.   
\end{equation}
Moreover, assume  that $\epsilon_1,\ldots,\epsilon_M\in\mathcal D\cap\Phi.$
For $\alpha,\beta\in\mathcal D$, we say that $\alpha$ and $\beta$ are \emph{$\Phi$-adjacent}, and write $\alpha\sim_\Phi\beta$ if $\beta-\alpha\in\{\mathbf 0\}\cup\Phi.$

The choice of $\Phi$ is motivated by the root system of type $A_M$.
Let $\gamma_1,\ldots,\gamma_M$ be the standard simple roots of $A_M$.  
Its positive roots are the consecutive sums
\[
\gamma_j+\gamma_{j+1}+\cdots+\gamma_k
\qquad (1\leq j\leq k\leq M).
\]
Hence the invertible linear map determined by $\gamma_j\longmapsto\alpha_j, \ (1\leq j\leq M)$
identifies $\Phi$ with a generally nonstandard linear realization of $A_M$.

Following
\cite{IT-2012,BKZZ-2024}, we extend the notion of an
$A_M$-Leonard pair to this polyhedral domain.
\begin{definition}[{cf.\ \cite[Definition~7.1]{BKZZ-2024}}]
\label{def:pie}
Let $\F$ be a field, let $V$ be a finite-dimensional vector space
over $\F$, and let $H,\widetilde H\subseteq\operatorname{End}(V)$
be $M$-dimensional subspaces.  We call $(H,\widetilde H)$ a
 \emph{polyhedral $A_M$-Leonard pair on $\mathcal D$ with respect to
 $\Phi$} if the following conditions hold.
 \begin{enumerate}[label={\rm(\roman*)}]
  \item The elements of $H$ are diagonalizable and mutually commute.
  \item The elements of $\widetilde H$ are diagonalizable and mutually commute.
  \item There exists a bijection $\alpha\mapsto V_\alpha$ from
  $\mathcal D$ to the common eigenspaces of $H$ such that
  \[
   \widetilde H V_\alpha
   \subseteq
   \sum_{\substack{\beta\in\mathcal D\\
                     \alpha\sim_\Phi\beta}}V_\beta
   \qquad(\alpha\in\mathcal D).
  \]
  \item There exists a bijection $\alpha\mapsto\widetilde V_\alpha$
  from $\mathcal D$ to the common eigenspaces of $\widetilde H$ such
  that
  \[
   H\widetilde V_\alpha
   \subseteq
   \sum_{\substack{\beta\in\mathcal D\\
                     \alpha\sim_\Phi\beta}}\widetilde V_\beta
   \qquad(\alpha\in\mathcal D).
  \]
  \item There is no subspace $W$ of $V$ such that
$HW\subseteq W,\quad \widetilde H W\subseteq W,
   \quad 0\neq W\neq V.$
  \item For every $\alpha\in\mathcal D$,
  $\dim V_\alpha=\dim\widetilde V_\alpha=1$.
  \item There exists a nondegenerate symmetric bilinear form
  $\langle\ ,\ \rangle$ on $V$ such that
  \[
   \langle V_\alpha,V_\beta\rangle=0,
   \qquad
   \langle\widetilde V_\alpha,\widetilde V_\beta\rangle=0
   \qquad(\alpha\neq\beta).
  \]
 \end{enumerate}
\end{definition}

\begin{definition}
\label{def:account}
Let $\mathfrak X=(X,\{A_\alpha\}_{\alpha\in\mathcal D})$
be a symmetric association scheme, with primitive idempotents $\{E_\alpha\}_{\alpha\in\mathcal D}$.
We call $\mathfrak X$ a polyhedral $A_M$ multivariate $P$- and $Q$-polynomial association scheme on $\mathcal D$ with respect to $\Phi$ if the following conditions hold.
\begin{enumerate}[label={\rm(\roman*)}]
 \item
 $\mathfrak X$ is an $M$-variate $P$-polynomial association scheme on
 $\mathcal D$ with respect to a monomial order $\le_P$, and, for
 every $1\leq j\leq M$ and $\alpha,\beta\in\mathcal D$, if $p_{\epsilon_j,\alpha}^{\beta}\neq0$ then $\beta-\alpha\in\{\mathbf 0\}\cup\Phi.$
 \item
 $\mathfrak X$ is an $M$-variate $Q$-polynomial association scheme on
 $\mathcal D$ with respect to a monomial order $\le_Q$, and, for
 every $1\leq j\leq M$ and $\alpha,\beta\in\mathcal D$, if $q_{\epsilon_j,\alpha}^{\beta}\neq0$ then $\beta-\alpha\in\{\mathbf 0\}\cup\Phi.$
\end{enumerate}
\end{definition}
\begin{remark}
If we take $\mathcal D=\{\alpha\in\mathbb N^M:|\alpha|\leq N\}$
and choose $\alpha_j=\epsilon_j-\epsilon_{j+1}\ (1\leq j<M)$,  $\alpha_M=\epsilon_M$, then \cref{eqn:conflict} becomes
\[
 \Phi=\{\pm\epsilon_j:1\leq j\leq M\}
 \cup\{\epsilon_j-\epsilon_k:1\leq j\neq k\leq M\}.
\]
Hence
\cref{def:pie,def:account} reduce to Definitions~7.1 and 7.2 in \cite{BKZZ-2024}, respectively.
\end{remark}

The following result extends \cite[Theorem~7.3]{BKZZ-2024} to a polyhedral domain with a more general adjacency relation.
\begin{theorem}
\label{thm:obligation}
A polyhedral $A_M$ multivariate $P$- and
$Q$-polynomial association scheme on $\mathcal D$ has the structure of a polyhedral $A_M$-Leonard
pair over $\mathbb C$.
\end{theorem}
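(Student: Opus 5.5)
We follow the construction of \cite[Theorem~7.3]{BKZZ-2024}: fix a vertex $x_0\in X$, take the Terwilliger algebra viewpoint, and let $V=\mathbb C^X$ act on the primary module. Concretely, take $H=\operatorname{span}\{A_{\epsilon_1},\ldots,A_{\epsilon_M}\}$ and $\widetilde H=\operatorname{span}\{A^*_{\epsilon_1},\ldots,A^*_{\epsilon_M}\}$, where $A^*_{\epsilon_j}=A^*_{\epsilon_j}(x_0)$ is the dual adjacency matrix, namely the diagonal matrix with $(y,y)$-entry $|X|(E_{\epsilon_j})_{x_0,y}$. We restrict both to the primary $T$-module $W_0=M\hat x_0$, where $M$ is the Bose--Mesner algebra and $\hat x_0$ the characteristic vector. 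This module has bases $\{E_\alpha\hat x_0\}_{\alpha\in\mathcal D}$ and $\{E^*_\alpha\hat x_0'\}$, with $\{A_\alpha\hat x_0\}_{\alpha\in\mathcal D}$ one of them. Both families have size $|\mathcal D|$ and consist of nonzero vectors, since $\|E_\alpha\hat x_0\|^2=m_\alpha/|X|$ and $A_\alpha\hat x_0\ne0$. Dimension $M$ of $H$ and $\widetilde H$ follows from the linear independence of the $A_{\epsilon_j}$ and of the $E_{\epsilon_j}$.

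Conditions (i) and (ii) of \cref{def:pie} are immediate: the $A_{\epsilon_j}$ are commuting real symmetric matrices, and the $A^*_{\epsilon_j}$ are diagonal. For (iii) and (vi) we set $V_\alpha=\mathbb C E_\alpha\hat x_0$. By the $P$-polynomial structure (\cref{def:twin}), every $A_\beta$ is a polynomial in the $A_{\epsilon_j}$. The joint eigenvalue vectors $(P_{\epsilon_1}(\alpha),\ldots,P_{\epsilon_M}(\alpha))$ therefore separate the idempotents, and the common eigenspaces of $H$ on $W_0$ are exactly the lines $V_\alpha$. Dually, let $\widetilde V_\alpha=\mathbb C A_\alpha\hat x_0$, which is the span of $\hat y$ for $(x_0,y)\in R_\alpha$. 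By the $Q$-polynomial structure, $A^*_\beta$ is a Hadamard polynomial in the $A^*_{\epsilon_j}$, so the joint dual eigenvalues $(Q_{\epsilon_j}(\alpha))_j$ separate the relations. This yields the common eigenspaces of $\widetilde H$.

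For the adjacency conditions (iii)--(iv) we use the standard identities
\[
E_\beta A^*_{\epsilon_j}E_\alpha=0 \iff q_{\epsilon_j,\alpha}^{\beta}=0,\qquad
E^*_\beta A_{\epsilon_j}E^*_\alpha=0 \iff p_{\epsilon_j,\alpha}^{\beta}=0 .
\]
More precisely, on the primary module $A^*_{\epsilon_j}E_\alpha\hat x_0=\sum_\beta \frac{q^\beta_{\epsilon_j,\alpha}}{|X|}\cdot(\text{const})\,E_\beta\hat x_0$, and $A_{\epsilon_j}A_\alpha\hat x_0=\sum_\beta p^\beta_{\epsilon_j,\alpha}A_\beta\hat x_0$. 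The latter identity is direct from the definition of the intersection numbers. The former follows by expanding $E_\alpha\circ E_{\epsilon_j}$, since $A^*_{\epsilon_j}E_\alpha\hat x_0=|X|(E_{\epsilon_j}\circ E_\alpha)\hat x_0$ columnwise. Then \cref{def:account}(i)--(ii) gives that nonzero coefficients occur only for $\beta-\alpha\in\{\mathbf 0\}\cup\Phi$, i.e.\ $\alpha\sim_\Phi\beta$, and these are precisely (iv) and (iii).

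Condition (vii) uses the standard bilinear form $\langle u,v\rangle=u^{\mathsf T}v$ on $W_0$. The $E_\alpha$ are real symmetric orthogonal idempotents, so $\langle E_\alpha\hat x_0,E_\beta\hat x_0\rangle=(E_\alpha E_\beta)_{x_0x_0}=0$ for $\alpha\ne\beta$. The vectors $A_\alpha\hat x_0$ have disjoint supports, so they are orthogonal as well. Nondegeneracy holds because the form restricted to a real space with an orthogonal basis of nonzero vectors is nondegenerate.

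The main obstacle is irreducibility (v). Let $0\ne W\subseteq W_0$ be invariant under $H$ and $\widetilde H$. Because $H$ generates the Bose--Mesner algebra and its common eigenspaces on $W_0$ are the lines $V_\alpha$, the space $W$ is a sum of some $V_\alpha$. Similarly, $W$ is a sum of some $\widetilde V_\beta$, since $\widetilde H$ generates the dual Bose--Mesner algebra. The subspace $\widetilde V_{\mathbf 0}=\mathbb C\hat x_0$ is involved in every $V_\alpha$, because $\langle E_\alpha\hat x_0,\hat x_0\rangle=m_\alpha/|X|\ne0$. Thus some $\widetilde V_\beta\subseteq W$. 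Applying $A_\beta$-polynomials, or using $E^*_{\mathbf 0}$ together with the fact that $E_\alpha\hat x_0$ has nonzero component along $\hat x_0$, we get $\hat x_0\in W$: indeed $E^*_{\mathbf 0}W\subseteq W$, and $E^*_{\mathbf 0}$ is a polynomial in $\widetilde H$. Hence $W\supseteq M\hat x_0=W_0$. The care needed here is verifying that polynomials in $H$ (respectively $\widetilde H$) yield all of $M$ (respectively the dual algebra) on a polyhedral, non-simplicial domain. This is exactly what conditions (i) and (i$'$) of \cref{def:twin,def:expertise} supply, independently of the shape of $\mathcal D$.
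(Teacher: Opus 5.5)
Your proposal is correct. For conditions (i)--(iv), (vi) and (vii) it uses the same construction as the paper: the primary module $V=\mathcal A\hat x_0=\mathcal A^*\mathbf 1$, the lines $V_\alpha=\mathbb C E_\alpha\hat x_0$ and $\widetilde V_\alpha=\mathbb C A_\alpha\hat x_0=E^*_\alpha V$, and the identities $A^*_{\epsilon_j}E_\alpha\hat x_0=\sum_\beta q^\beta_{\epsilon_j,\alpha}E_\beta\hat x_0$ and $A_{\epsilon_j}A_\alpha\hat x_0=\sum_\beta p^\beta_{\epsilon_j,\alpha}A_\beta\hat x_0$. For (vii) you are more explicit than the paper about the bilinear form.

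You genuinely differ from the paper on irreducibility (v). The paper writes $W=\bigoplus_{\alpha\in\Lambda}V_\alpha$ and then forces $\Lambda=\mathcal D$ using three further facts:
\begin{itemize}
\item the Krein nonvanishing $q^{\alpha+\epsilon_s}_{\epsilon_s,\alpha}\neq0$;
\item the reciprocity $d_\beta q^\beta_{\epsilon_s,\alpha}=d_\alpha q^\alpha_{\epsilon_s,\beta}$;
\item the fact that every point of a down-set is joined to $\mathbf 0$ by unit steps.
\end{itemize}
You instead use the standard Terwilliger-algebra argument. $E^*_{\mathbf 0}$ lies in the unital algebra generated by $\widetilde H$, and $E^*_{\mathbf 0}E_\alpha\hat x_0=(d_\alpha/|X|)\hat x_0\neq0$. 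Hence $\hat x_0\in W$, and so $W\supseteq\mathcal A\hat x_0=V$.

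Your route needs only that $H$ and $\widetilde H$ generate $\mathcal A|_V$ and $\mathcal A^*|_V$, which is exactly conditions (i) and (i$'$). It is therefore independent of the shape of $\mathcal D$ and of the nonvanishing halves of the $P$- and $Q$-polynomial axioms. The paper's route stays inside the eigenbasis $\{V_\alpha\}$ and shows how the nonvanishing $q^{\alpha+\epsilon_s}_{\epsilon_s,\alpha}$ and the down-set hypothesis enter, but it depends on those extra hypotheses.

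Two small points to tidy:
\begin{itemize}
\item $A^*_\beta$ is an ordinary (not Hadamard) polynomial in the $A^*_{\epsilon_j}$, with constant term a multiple of $I$, the image of $|X|E_{\mathbf 0}=J$.
\item The equality $\dim H=\dim\widetilde H=M$ inside $\operatorname{End}(V)$ needs the restrictions to $V$ to be faithful. This holds because $A\mapsto A\hat x_0$ on $\mathcal A$ and $A^*\mapsto A^*\mathbf 1$ on $\mathcal A^*$ are injective.
\end{itemize}
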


\begin{proof}
Fix $x_0\in X$, and let $\mathcal A$ and $\mathcal A^*=\mathcal A^*(x_0)$ be the Bose--Mesner algebra and the dual Bose--Mesner algebra, respectively.  
Let
\[
 V=\mathcal A\hat{x}_0=\mathcal A^*\mathbf1,
 \qquad
 H=\operatorname{span}_{\mathbb C}
 \{A_{\epsilon_1},\ldots,A_{\epsilon_M}\},
 \qquad
 \widetilde H=\operatorname{span}_{\mathbb C}
 \{A_{\epsilon_1}^*,\ldots,A_{\epsilon_M}^*\}.
\]
For $\alpha\in\mathcal D$, denote $V_\alpha=E_\alpha V=\operatorname{span}\{v_\alpha\}$, $\widetilde V_\alpha=E_\alpha^*V$ and $v_\alpha=E_\alpha\hat{x}_0.$
The $P$- and $Q$-polynomial properties imply that $H$ and $\widetilde H$ generate $\mathcal A|_V$ and $\mathcal A^*|_V$, respectively. 
Their elements are diagonalizable and mutually commuting, so conditions~(i) and~(ii) of \cref{def:pie} hold. 
The spaces $V_\alpha$ and $\widetilde V_\alpha$ are the common eigenspaces of $H$ and $\widetilde H$, respectively, and are one-dimensional.
Thus condition~(vi) holds.  
The standard identities for intersection numbers and Krein parameters give 
\[
 A_{\epsilon_s}\widetilde V_\alpha \subseteq
 \sum_{\substack{\beta\in\mathcal D\\
    p_{\epsilon_s,\alpha}^{\beta}\neq0}} \widetilde V_\beta,
 \qquad
 A_{\epsilon_s}^*V_\alpha \subseteq
 \sum_{\substack{\beta\in\mathcal D\\
    q_{\epsilon_s,\alpha}^{\beta}\neq0}} V_\beta.
\]
Then the two conditions in \cref{def:account} imply the following
\[
 H\widetilde V_\alpha \subseteq
 \sum_{\substack{\beta\in\mathcal D\\
\beta-\alpha\in\{\mathbf0\}\cup\Phi}} \widetilde V_\beta,
 \qquad
 \widetilde H V_\alpha \subseteq
 \sum_{\substack{\beta\in\mathcal D\\
 \beta-\alpha\in\{\mathbf0\}\cup\Phi}} V_\beta,
\]
which are conditions~(iii) and~(iv) of \cref{def:pie}. 
Since $\mathfrak X$ is symmetric by assumption in \cref{def:account}, condition~(vii) holds as well.

Finally, let $0\ne W\subseteq V$ be invariant under $H$ and
$\widetilde H$. Since $H$ generates $\mathcal A|_V$, we have $W=\bigoplus_{\alpha\in\Lambda}V_\alpha$ for some nonempty $\Lambda\subseteq\mathcal D.$
The $Q$-polynomial property gives
$q_{\epsilon_s,\alpha}^{\alpha+\epsilon_s}\ne0$. 
Since $d_\beta q_{\epsilon_s,\alpha}^{\beta}
=d_\alpha q_{\epsilon_s,\beta}^{\alpha},$ we also have $q_{\epsilon_s,\alpha+\epsilon_s}^{\alpha}\ne0$.
Thus invariance under $\widetilde H$ implies that
$\alpha\in\Lambda$ if and only if  $\alpha\pm \epsilon_s \in\Lambda$.
Since $\mathcal D$ is a down-set w.r.t. $\preceq_{cw}$, repeated this equivalence gives that  $\alpha\in\Lambda\iff 0\in\Lambda$.
As $\Lambda\ne\varnothing$, we obtain $\Lambda=\mathcal D$ and
$W=V$, which proves (v).

Therefore $(H,\widetilde H)$ is a polyhedral $A_M$-Leonard pair.
\end{proof}

\begin{corollary}
\label{cor:hierarchy}
Let $1\leq i\leq a-1$. The association scheme induced on each
interior Grassmann shell $X_i$ is a polyhedral $A_3$ multivariate
$P$- and $Q$-polynomial association scheme and carries a polyhedral
$A_3$-Leonard pair over $\mathbb C$.
\end{corollary}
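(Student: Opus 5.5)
The plan is to check the hypotheses of \cref{def:account} for the scheme on $X_i$ with one explicit choice of $\Phi$, and then apply \cref{thm:obligation}. A first point is that \cref{def:account} indexes the relations and the idempotents by the same set $\mathcal D$. So I would start by observing that $\mathcal D_i=\mathcal D_i^*$ as subsets of $\mathbb N^3$, under the identification $(x,y,z)\leftrightarrow(m,n,r)$.

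\textbf{Step 1: the common domain.} The condition $z\le i-\max\{x,y\}$ is equivalent to the pair $x\le i-z$, $y\le i-z$. Together with $x\le a-i$ and $y\le b-i$, this reproduces exactly the defining inequalities of $\mathcal D_i^*$. The bounds $x\le i$ and $y\le i$ are then automatic, and $r\le i$ follows from $m\ge0$.

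Both sets are therefore the lattice points of the bounded polyhedron
\[
\mathcal P_i=\{(x,y,z)\in\mathbb R_{\ge0}^3:\ x\le a-i,\ y\le b-i,\ x+z\le i,\ y+z\le i\}.
\]
The down-set property and the membership $\epsilon_1,\epsilon_2,\epsilon_3\in\mathcal D_i$ were already verified in the proofs of \cref{thm:laboratory,thm:principle}. The scheme is symmetric, as noted in \cref{sect:minority}.

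\textbf{Step 2: the root system.} From \cref{lem:plain}, the nonzero differences $\beta-\alpha$ with $\widehat p_{\epsilon_s,\alpha}^\beta\ne0$ lie in
\[
\{\pm\epsilon_1,\ \pm\epsilon_2,\ \pm\epsilon_3,\ \pm(\epsilon_1-\epsilon_3),\ \pm(\epsilon_2-\epsilon_3)\},
\]
where the last two pairs come from $\xi\eta\le 0$. From \cref{lem:monster}, the nonzero differences for the Krein parameters lie in
\[
\{\pm\epsilon_1,\ \pm\epsilon_2,\ \pm(\epsilon_3-\delta\epsilon_1-\eta\epsilon_2):\ \delta,\eta\in\{0,1\}\}.
\]
The union of these two sets is $\pm$ the six vectors
\[
\epsilon_1,\quad \epsilon_2,\quad \epsilon_3,\quad \epsilon_3-\epsilon_1,\quad \epsilon_3-\epsilon_2,\quad \epsilon_3-\epsilon_1-\epsilon_2 .
\]
Six is exactly the number of positive roots of $A_3$.

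I would take
\[
\alpha_1=\epsilon_1,\qquad \alpha_2=\epsilon_3-\epsilon_1-\epsilon_2,\qquad \alpha_3=\epsilon_2 .
\]
These are linearly independent, since the matrix with these rows has determinant $\pm1$. Their consecutive sums are
\[
\alpha_1=\epsilon_1,\quad \alpha_2,\quad \alpha_3=\epsilon_2,\quad \alpha_1+\alpha_2=\epsilon_3-\epsilon_2,\quad \alpha_2+\alpha_3=\epsilon_3-\epsilon_1,\quad \alpha_1+\alpha_2+\alpha_3=\epsilon_3 .
\]
So $\Phi$ in \cref{eqn:conflict} is precisely the union above, and $\epsilon_1,\epsilon_2,\epsilon_3\in\Phi\cap\mathcal D_i$.

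\textbf{Step 3: conclusion.} Condition (i) of \cref{def:account} follows from \cref{thm:laboratory} with $\le_P=\le_{123}$, together with the support statement of \cref{lem:plain}. Condition (ii) follows from \cref{thm:principle} with $\le_Q=\le_{312}$, together with the support statement of \cref{lem:monster}. \cref{thm:obligation} then gives the polyhedral $A_3$-Leonard pair.

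The only real obstacle is finding a single linearly independent triple $\alpha_1,\alpha_2,\alpha_3$ whose consecutive sums cover both the $P$-neighbourhoods and the $Q$-neighbourhoods, which have different shapes. The choice with $\alpha_2=\epsilon_3-\epsilon_1-\epsilon_2$ placed in the middle resolves this.
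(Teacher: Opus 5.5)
Your proposal is correct and follows the paper's proof. You identify $\mathcal D_i=\mathcal D_i^*$ as the lattice points of $\{x\le a-i,\ y\le b-i,\ x+z\le i,\ y+z\le i\}$, pick a linearly independent triple whose consecutive sums cover the supports in \cref{lem:plain,lem:monster}, and combine \cref{thm:laboratory,thm:principle} with \cref{thm:obligation}. Your triple $\alpha_1=\epsilon_1$, $\alpha_2=\epsilon_3-\epsilon_1-\epsilon_2$, $\alpha_3=\epsilon_2$ differs from the paper's $\epsilon_1-\epsilon_3$, $\epsilon_3$, $\epsilon_2-\epsilon_3$, but both generate the same set $\Phi=\Phi^+\sqcup(-\Phi^+)$ with $\Phi^+=\{\epsilon_1,\epsilon_2,\epsilon_3,\epsilon_1-\epsilon_3,\epsilon_2-\epsilon_3,\epsilon_1+\epsilon_2-\epsilon_3\}$, so the rest of the argument is identical.
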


\begin{proof}
 Note that the index sets for $\widehat  A_\alpha$ and $E_\alpha$ coincide, namely, 
\[
\mathcal D_i=\mathcal D_i^*
=\{(u,v,w)\in\mathbb N^3 \mid 
u\leq a-i,\ v\leq b-i,\ u+w\leq i,\ v+w\leq i\}.
\]
Thus the adjacency matrices and primitive idempotents are indexed by
the same bounded polyhedral domain.  
Take three linearly independent vectors
$\alpha_1=\epsilon_1-\epsilon_3$, $\alpha_2=\epsilon_3$, and
$\alpha_3=\epsilon_2-\epsilon_3$. 
Denote $\Phi^+
=\{\epsilon_1,\epsilon_2,\epsilon_3,
\epsilon_1-\epsilon_3,\epsilon_2-\epsilon_3,
\epsilon_1+\epsilon_2-\epsilon_3\}$ and $\Phi=\Phi^+\sqcup(-\Phi^+)$.
For the $P$-polynomial structure, \cref{lem:plain} shows that
\[
\widehat p_{\epsilon_s,\alpha}^{\beta}\neq0
\quad\Longrightarrow\quad
\beta-\alpha\in
\{0,\pm\epsilon_3,\pm\epsilon_1,\pm\epsilon_2,
\pm(\epsilon_1-\epsilon_3),\pm(\epsilon_2-\epsilon_3)\}
\subseteq\{\mathbf0\}\cup\Phi.
\]
For the $Q$-polynomial structure, \cref{lem:monster} shows that $\widehat q_{\epsilon_s,\alpha}^{\beta}\neq0
\Longrightarrow
\beta-\alpha\in\{\mathbf0\}\cup\Phi.$
Thus the two conditions with respect to $\Phi$ in \cref{def:account} hold.
Together with \cref{thm:laboratory,thm:principle},  the interior Grassmann shell is a polyhedral $A_3$ multivariate $P$- and $Q$-polynomial association scheme.  
The Leonard-pair conclusion follows from \cref{thm:obligation}.   
\end{proof}

\section{Concluding remarks}

In this paper, we show that the association scheme induced on each interior shell of the Grassmann scheme is both trivariate $P$-polynomial and
trivariate $Q$-polynomial.  
The two assertions arise, respectively, from combinatorial arguments for the intersection numbers and from Dunkl's spherical functions together with identities and recurrence relations for the associated $q$-Hahn and $q$-Krawtchouk polynomials.  
Thus the interior shells provide a family of trivariate examples with a common bounded non-simplicial polyhedral index domain.  
We also extend the notion of an $A_M$-Leonard pair from standard simplicial domains to polyhedral domains and, at the same time, generalize the standard $A_M$ adjacency relation to one determined by $\Phi$.  
We then show that each interior shell of Grassmann scheme carries a polyhedral $A_3$-Leonard pair with respect to $\Phi$.

\section*{Acknowledgments}
     This research is supported by National Natural Science Foundation of China No.\ 12571353 and Natural Science Foundation of Shanghai No.\ 24ZR1455100.
\bibliography{reference}
\end{document}